\theoremstyle{plain}
\newtheorem{thm}{Theorem}[section]
\newtheorem{theorem}[thm]{Theorem}
\newtheorem{lemma}[thm]{Lemma}
\newtheorem{corollary}[thm]{Corollary}
\newtheorem{proposition}[thm]{Proposition}
\theoremstyle{definition}
\newtheorem{remark}[thm]{Remark}
\newtheorem{example}[thm]{Example}
\newtheorem{question}[thm]{Question}
\numberwithin{equation}{section}
 \title[Bounded birationality and isomorphism problems are computable]{Bounded birationality and isomorphism problems are computable}
 \author{Tuyen Trung Truong}
  \address{Department of Mathematics, University of Oslo, Blindern 0851 Oslo, Norway}
  \email{tuyentt@math.uio.no}
    \date{\today}
    \keywords{Birationality problem; Explicit bounds; Gr\"obner bases; Linear projections; Monoids}
   \subjclass[2010]{14Exx}
\begin{document}
\maketitle
\begin{abstract}
Let $X,Y$ be two irreducible subvarieties of the projective space $\mathbb{P}^n$, and $d\geq 1$ an integer number. The main result of this paper is an algorithm to construct {\bf explicitly}, in terms of $d$ and the ideals defining $X$ and $Y$, a quasi-affine algebraic variety parametrising the set of all birational maps $f$ from $X$ onto $Y$ which can be extended to a self-rational map of $\mathbb{P}^n$ of degree $\leq d$.  

Based on this result, we propose an approach towards the rationality problem (see Section 3 below), solve it for some simple cases (varieties of general type or curves), and state a rough strategy for reducing it to some simpler cases via Iitaka's fibrations.  

We also prove similar results for the case $f$ is  a dominant rational map, regular morphism, isomorphism or regular embedding. Similar results are valid for varieties over an arbitrary algebraically closed field, and also for maps on non-projective varieties. 
\end{abstract}

\section{Introduction} The main theme in this paper is to show that the existence of rational maps from a given projective variety $X\subset \mathbb{P}^n$ to another projective variety $Y\subset \mathbb{P}^n$ with certain interesting properties (such as birational, dominant, regular, isomorphic or regular embedding) and with {\bf bounded degree} is computable, that is can be detected by an algorithm whose complexity is explicitly bound. We will treat first the main case of interest, that of birational maps. With some minor modifications, the proofs for the projective varieties also work for non-projective varieties. In particular, (biregular) isomorphisms of bounded degrees between two irreducible, smooth algebraic varieties can be parametrised by an algebraic variety. These results are valid over fields of any characteristic, which is a not-negligible point, given that many results known in characteristic zero is not known in positive characteristic. For example, it is still unknown whether resolution of singularities hold in positive characteristic, or to what extend the known results in the Minimal Model Program can be done in positive characteristic. 

The results in this paper give support to affirmative answers to the following questions, which to our knowledge is unknown in the general case:

\begin{question} a) Let $X$ and $Y$ be irreducible algebraic varieties. Can the set of birational maps between $X$ and $Y$ be parametrised by a scheme? b) Let $X$ and $Y$ be smooth irreducible algebraic varieties (not necessarily projective). Can the set of biregular isomorphisms between $X$ and $Y$ be parametrised by a scheme?  
\end{question} 

We note that in general deciding biregular isomorphisms requires much more work than deciding birational maps. Based on the main results in this paper, we propose a rough strategy towards the rationality problem, via Iitaka's fibrations, in Section 3.  

{\bf Acknowledgments.} We would like to thank  John Christian Ottem and Kristian Ranestad for their generous help and many useful discussions and comments concerning Section 2, and to John Christian Ottem also for discussions involving Section 3. We also would like to thank Massimilliano Mella for answering some questions concerning the paper \cite{mella-polastri}, and thank De-Qi Zhang for answering some questions concerning the paper \cite{birkar-zhang}. Other comments from various people, which helped to improve the presentation of the paper, are also very much appreciated.   

\section{Birational maps of bounded degrees} This section treats the class of maps, reflected in the title of this paper, which is of main interest to us: birational maps.

The birationality problem asks whether there is a birational map $f$ between two given irreducible algebraic varieties $X$ and $Y$. It is a fundamental and classical question in algebraic geometry. Having the seminal G\"odel theorem that many mathematical questions are undecidable, it is at least psychologically important to ask the question: Is the birationality problem decidable? This question is also practically important. A special case of the birationality problem, the so-called rationality problem, which seeks to check whether a given variety is birationally equivalent to a projective space, has attracted a lot of attention and effort. As far as we know, even this special case is still open in general.  To this end, in this paper, we prove that a weaker version of the birationality question, called {\bf bounded} birationality problem, is not only decidable but also even {\bf computable}. That is, there is an algorithm - whose complexity is explicitly bound - to solve the weaker version. As a consequence, the union of all varieties $W(X,Y,d)$ in Theorem \ref{TheoremMain} below, where $d$ runs all over $\mathbb{N}$, is a countable complete set of invariants for the birationality problem. (Note that it is usually the case in mathematics that a property is determined via a countable set of invariants. For example, the Kodaira dimension of a variety $X$ is defined based on the behaviour of the sequence $h^0(X,K_X^{\otimes m})$. Related questions for unirationally connected varieties and uniruled varieties, for a smooth projective variety $X$, are also stated in terms of countable invariants: in the first case it is Mumford's conjecture which concerns the vanishing of all $h^0(X,(T_X^*)^{\otimes m})$, in the second case it is Mori's conjecture which concerns the vanishing of all $h^0(X,K_X^{\otimes m})$. In this aspect, our result can be restated as that the birationality problem for given $X$ and $Y$ is characterised by an explicit countable set of invariants.) 

{\bf Bounded Birationality Problem.} Given $X,Y$ irreducible complex algebraic subvarieties of $\mathbb{P}^n$ (where $n\geq 2$) and a positive integer $d$. Is there a (bi)rational map $F:\mathbb{P}^n\dashrightarrow \mathbb{P}^n$ with $\deg (F)\leq d$ whose restriction to $X$ is a birational map $F|_X:X\dashrightarrow Y$?

Note that any rational map $X\dashrightarrow Y$ is the restriction of a rational map $\mathbb{P}^n\dashrightarrow \mathbb{P}^n$.  Moreover, by results in \cite{mella-polastri, ciliberto-cueto-mella-ranestad-zwiernik}, whenever $X,Y\subset\mathbb{P}^n$ with $n-2\geq \dim (X), \dim (Y)$,  the existence of a birational map $X\dashrightarrow Y$ is equivalent to the existence of a birational map $F:\mathbb{P}^n\dashrightarrow \mathbb{P}^n$ whose restriction to $X$ is a birational map $F|_X:X\dashrightarrow Y$. On the other hand, there are cases of $\dim (X)=\dim (Y)=n-1$ for which there are no such birational map extensions $F$ \cite{mella-polastri2}.  In the above formulation of the Bounded Birationality Problem, we allow $X,Y$ to be hypersurfaces of $\mathbb{P}^n$. 

Define $\mathcal{B}(X,Y,d):=\{F:\mathbb{P}^n\dashrightarrow \mathbb{P}^n:~F$ is {\bf rational}, $\deg (F)\leq d$, $F|_X$ is birational from $X$ onto $Y\}$. Also, define  $\mathcal{B}^+(X,Y,d):=\{F:\mathbb{P}^n\dashrightarrow \mathbb{P}^n:~F$ is {\bf birational}, $\deg (F)\leq d$, $F|_X$ is birational from $X$ onto $Y\}$. The set $\bigcup _{d=1}^{\infty}\mathcal{B}(X,Y,d)$ is exactly those rational selfmaps of $\mathbb{P}^n$ whose restriction to $X$ is a birational map onto $Y$. A similar interpretation can be given for $\bigcup _{d=1}^{\infty}\mathcal{B}^+(X,Y,d)$.   Note that from results in \cite{mella-polastri2} as mentioned above, if $X$ and $Y$ are hypersurfaces in $\mathbb{P}^n$, it may happen that $\bigcup _{d=1}^{\infty}\mathcal{B}(X,Y,d)\not= \emptyset$ while $\bigcup _{d=1}^{\infty}\mathcal{B}^+(X,Y,d)= \emptyset$. On the other hand, by \cite{mella-polastri} as mentioned above, if $\dim (X)=\dim (Y)\leq n-2$, then $\bigcup _{d=1}^{\infty}\mathcal{B}(X,Y,d)\not= \emptyset$ if and only if $\bigcup _{d=1}^{\infty}\mathcal{B}^+(X,Y,d)\not= \emptyset$.

The main result in this section is the following. (An accompanying algorithm will also be given.) 
\begin{theorem}
The Bounded Birationality Problem has a solution if and only if at least one among {\bf explicitly} constructed $C_1$ systems of polynomial equations in $C_2$ variables, each polynomial  of degree bounded from above by a constant $C_3$, has one solution in $\mathbb{C}$. Here $C_1, C_2, C_3$ and the number of polynomials in each system of equations are explicitly bounded in terms of $X, Y$, $\deg (F)$ and $n$.

In other words, there is an explicitly bounded number of variables $\alpha _1,\ldots ,\alpha _N$, an explicitly constructed finite dimensional (generally reducible) variety $W(X,Y,d)$ in variables $\alpha _1,\ldots ,\alpha _N$,  and a surjective map $\kappa  :W(X,Y,d)\rightarrow \mathcal{B}(X,Y,d)$. The variety $W(X,Y,d)$ is non-empty if and only if the set $\mathcal{B}(X,Y,d)$ is non-empty. 

Similarly, there is an explicitly constructed finite dimensional variety $W^+(X,Y,d)$ and a surjective map $\kappa ^+:W^+(X,Y,d)\rightarrow \mathcal{B}^+(X,Y,d)$. 

\label{TheoremMain}\end{theorem}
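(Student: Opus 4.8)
The plan is to translate the geometric conditions defining $\mathcal{B}(X,Y,d)$ and $\mathcal{B}^{+}(X,Y,d)$ into finitely many systems of polynomial equations in finitely many auxiliary variables, carrying out every step effectively with an \emph{explicitly} bounded complexity. Two tools drive the reduction. The first is the $d$-uple Veronese embedding $v_{d}\colon\mathbb{P}^{n}\hookrightarrow\mathbb{P}^{M}$, $M+1=\binom{n+d}{d}$: a degree-$\le d$ self-rational map $F=[F_{0}:\cdots:F_{n}]$ of $\mathbb{P}^{n}$ equals $\pi\circ v_{d}$ for a linear projection $\pi\colon\mathbb{P}^{M}\dashrightarrow\mathbb{P}^{n}$, and $F|_{X}$ is then identified with $\pi|_{X_{d}}$, where $X_{d}:=v_{d}(X)$ and $I_{X_{d}}$ is computed from $I_{X}$; so $F|_{X}$ is birational onto $Y$ exactly when $\pi|_{X_{d}}$ is. The second is the theory of Gr\"obner bases: for an ideal generated in bounded degree in boundedly many variables, a Gr\"obner basis has \emph{explicitly} bounded degrees, so ideal membership, normal forms (hence translating ``a polynomial vanishes on $X$'' into equations on its coefficients), and ideal dimensions are all decidable with explicitly bounded complexity. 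Throughout, the parameters are the coefficients $\alpha=(\alpha_{1},\dots,\alpha_{N})$, $N+1=(n+1)\binom{n+d}{d}$, of $F_{0},\dots,F_{n}$ (equivalently, the entries of the matrix of $\pi$).

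Next I would write down the conditions carving $\mathcal{B}(X,Y,d)$ out of this parameter space. (i) \emph{$F$ defines a rational map on $X$}: not all $F_{i}$ lie in the prime ideal $I_{X}$; writing $I_{X}$ in the relevant degree as $(\ell_{1},\dots,\ell_{c})$, the bad locus is the linear subspace $\bigcap_{j}V(\ell_{j})$ of $\alpha$-space, so the condition is the union of the $c$ basic open sets $\{\ell_{j}\neq0\}$, each of which I make closed by adjoining one variable via the Rabinowitsch trick --- one source of the several systems in the statement. (ii) \emph{$\overline{F(X)}\subseteq Y$}: for each generator $g$ of $I_{Y}$ one needs $g(F_{0},\dots,F_{n})\in I_{X}$; reducing $g(F_{0},\dots,F_{n})$ --- a polynomial in $x$ whose coefficients are polynomials in $\alpha$ of degree $\le\deg g$ --- modulo a Gr\"obner basis of $I_{X}$ and equating to zero the coefficients of the normal form gives explicit polynomial equations in $\alpha$. (iii) \emph{$F|_{X}$ is birational onto $Y$}: rather than detecting dominance and generic injectivity separately (which would require Jacobian ranks and dimensions of parametrised ideals, hence a stratification of $\alpha$-space, and which in positive characteristic need not even be equivalent to birationality), I would adjoin a second self-rational map $G=[G_{0}:\cdots:G_{n}]$ of $\mathbb{P}^{n}$ of degree $\le D_{0}$ --- with a bound $D_{0}$ pinned down below --- with its coefficients $\beta$ as new variables, impose that $G$ defines a rational map on $Y$ as in (i) (now with $I_{Y}$), and impose the two identities $G|_{Y}\circ F|_{X}=\mathrm{id}_{X}$ and $F|_{X}\circ G|_{Y}=\mathrm{id}_{Y}$. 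Clearing denominators, the first reads $x_{i}\,G_{j}(F_{0}(x),\dots,F_{n}(x))-x_{j}\,G_{i}(F(x))\equiv0\pmod{I_{X}}$ for all $i,j$, a polynomial identity in $x$ with coefficients polynomial in $(\alpha,\beta)$, turned into equations in $(\alpha,\beta)$ by Gr\"obner reduction modulo $I_{X}$; the second is symmetric, modulo $I_{Y}$. Together with (ii) these identities are equivalent to ``$F|_{X}$ birational onto $Y$'', because $F|_{X}\circ G|_{Y}=\mathrm{id}_{Y}$ already forces $\overline{F(X)}\supseteq Y$.

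The main obstacle is to show that the bound $D_{0}$ can be taken to depend \emph{explicitly} --- and only --- on $n$, $d$, $\deg X$, $\deg Y$, uniformly over all $F\in\mathcal{B}(X,Y,d)$; this is precisely what makes $\kappa$ \emph{surjective} rather than merely a non-emptiness test. I would establish it via the closure $\Gamma=\overline{\{(x,F(x)):x\in X\}}\subseteq\mathbb{P}^{n}\times\mathbb{P}^{n}$ of the graph of $F|_{X}$. It is a component of the subscheme of $X\times Y$ cut out by the bidegree-$(d,1)$ forms $y_{i}F_{j}(x)-y_{j}F_{i}(x)$, so its multidegrees are bounded by multihomogeneous B\'ezout in terms of $n$, $d$, $\deg X$, $\deg Y$; and since the second projection $\Gamma\to Y$ is birational with second coordinate the inclusion $Y\hookrightarrow\mathbb{P}^{n}$, the inverse $F|_{X}^{-1}\colon Y\dashrightarrow X\subseteq\mathbb{P}^{n}$ is the first coordinate of $(p_{2}|_{\Gamma})^{-1}$. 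Reading the equations of $\Gamma$ as polynomials in $x$ with coefficients polynomial in $y$ and solving (Cramer-style) for the ratios of the $x_{i}$ on the generic fibre expresses the components of $F|_{X}^{-1}$ by forms in $y$ whose degree is controlled by the multidegree of $\Gamma$; in the linearised picture this is the classical bound ``$\le\deg X_{d}$'' for the inverse of a linear projection restricted to $X_{d}$. Propagating this, together with the Gr\"obner-basis degree estimates behind (i)--(iii), through the final count is the remaining, essentially bookkeeping, work.

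Finally I would take $W(X,Y,d)$ to be the finite union of the affine varieties cut out by the systems above --- one for each choice of the basic-open index in (i), applied to $F$ and to $G$ --- inside the affine space with coordinates $\alpha$, $\beta$ and the Rabinowitsch variables, and define $\kappa$ by sending a point to the degree-$\le d$ map $F_{\alpha}$ it encodes. By construction $\kappa$ is surjective onto $\mathcal{B}(X,Y,d)$ (here the bound $D_{0}$ is used), and $W(X,Y,d)\neq\emptyset$ iff $\mathcal{B}(X,Y,d)\neq\emptyset$; the constants $C_{1},C_{2},C_{3}$ and the number of equations per system are then read off from the Veronese dimensions, $\deg X$, $\deg Y$, $D_{0}$, and the Gr\"obner-basis degree bounds. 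For $W^{+}(X,Y,d)$ and $\mathcal{B}^{+}(X,Y,d)$ one runs the same construction with the further requirement that $F$ be birational on all of $\mathbb{P}^{n}$: adjoin the coefficients of a self-inverse of $F$ of degree $\le d^{\,n-1}$ (the classical degree bound for the inverse of a Cremona transformation) together with the two corresponding identities on $\mathbb{P}^{n}$, which are polynomial conditions of the same shape, and intersect.
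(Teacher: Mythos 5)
Your approach is genuinely different from the paper's, and at a high level it is attractive: instead of the paper's inductive descent through a chain of monoid projections (Lemma~\ref{LemmaMonoidals}), you parametrise a candidate inverse $G$ directly and impose the two composition identities, translating everything into equations in $(\alpha,\beta)$ via Gr\"obner reduction modulo $I_X$ and $I_Y$. If that could be made rigorous it would be shorter and arguably cleaner than the paper's construction of the $H_{i,j}$.

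But the step you flag as the ``main obstacle'' is in fact a genuine gap, not bookkeeping, and it is exactly the step the paper's monoid machinery is there to supply. You need an \emph{explicit} bound $D_0$, depending only on $n,d,\deg X,\deg Y$, such that whenever $F|_X$ is birational onto $Y$ with $\deg F\le d$, the inverse $(F|_X)^{-1}\colon Y\dashrightarrow X$ extends to $G\colon \mathbb{P}^n\dashrightarrow\mathbb{P}^n$ with $\deg G\le D_0$. Without this, $\kappa$ fails to be surjective. Your justification is ``solving (Cramer-style) for the ratios of the $x_i$ on the generic fibre'' and an appeal to a ``classical bound $\le\deg X_d$ for the inverse of a linear projection.'' Neither is available as stated. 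The bidegree-$(d,1)$ (or, after Veronese, bidegree-$(1,1)$) relations cutting out $\Gamma$ do not suffice to solve for $x$ in terms of $y$: $\Gamma$ is a proper component of the zero set of those forms together with the equations of $X$ (resp.\ $X_d$), the latter being nonlinear in $x$ and contributing no $y$-dependence, so there is no ``Cramer'' elimination giving bidegree-$(1,e)$ elements of $I(\Gamma)$ with $e$ controlled. Equivalently, the statement that the ideal of $\Gamma$ contains forms of bidegree $(1,e)$ with $e$ bounded in the stated quantities is precisely what has to be proved. The paper handles this by showing (Lemma~\ref{LemmaMonoidals}, an effective version of the lemmas in \cite{ciliberto-cueto-mella-ranestad-zwiernik}) that $\Gamma_f$ sits inside a monoid of explicitly bounded degree with a vertex at the centre of projection, for which the inverse of the projection \emph{is} explicit and of degree equal to the monoid degree; iterating this $n$ times yields the required bound. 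So your route can likely be repaired by importing exactly that lemma (or another rigorous argument of equal strength) to establish $D_0$, at which point the rest of your construction (Veronese linearisation, Rabinowitsch trick, Gr\"obner normal forms to turn ideal membership into equations in the parameters, Chevalley to land in constructible sets) goes through. Two smaller points worth tightening: the two composition identities as written only say the relations hold identically on $X$ and $Y$; you also need to force $F(X)\not\subseteq\mathcal{I}(G)$ and $G(Y)\not\subseteq\mathcal{I}(F)$ (another Rabinowitsch variable each) for the compositions to actually make sense as rational maps. And a self-rational map $[F_0:\cdots:F_n]$ requires the $F_i$ to be of a \emph{common} degree $\le d$; either loop over the possible common degrees or multiply through by a fixed power of a coordinate, but one must choose a convention before invoking the single Veronese $v_d$.
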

\begin{remark} From the proof of the theorem, it is easy to see that it is also valid for varieties over an arbitrary algebraically closed field. \end{remark}

We now present a consequence of this result. Denote by $\mathcal{R}(n,d)$ the set $\{F:\mathbb{P}^n\dashrightarrow \mathbb{P}^n:~F$ is {\bf rational}$\}$, and by $\mathcal{R}^+(n,d)$ the set $\{F:\mathbb{P}^n\dashrightarrow \mathbb{P}^n:~F$ is {\bf birational}$\}$. We recall that here we work with Zariski topology, and that a set in a topological space is {\bf locally closed} if it is the intersection of an open and a closed set, and a set is {\bf constructible} if it is  a finite union of locally closed sets.  
 
\begin{corollary}
The sets $\mathcal{R}(n,d)$ and $\mathcal{R}^+(n,d)$ are algebraic varieties.  

The subset $\mathcal{B}(X,Y,d)$ of $\mathcal{R}(n,d)$ is constructible. In particular, $\mathcal{B}(X,Y,d)$ is an algebraic variety, even though it may not be a closed subvariety of $\mathcal{R}(n,d)$.

Similarly, the subset $\mathcal{B}^+(X,Y,d)$ of $\mathcal{R}^+(n,d)$ is constructible. In particular, $\mathcal{B}^+(X,Y,d)$ is an algebraic variety, even though it may not be a closed subvariety of $\mathcal{R}^+(n,d)$. 
\label{CorollaryMain}\end{corollary}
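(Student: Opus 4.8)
The plan is to construct the variety structures on $\mathcal{R}(n,d)$ and $\mathcal{R}^+(n,d)$ by hand, and then to read off the statements about $\mathcal{B}(X,Y,d)$ and $\mathcal{B}^+(X,Y,d)$ from Theorem~\ref{TheoremMain} by means of Chevalley's theorem (the image of a morphism of finite type $\mathbb{C}$-schemes is constructible) together with the elementary fact that a constructible subset of an algebraic variety, being a finite disjoint union of locally closed---hence quasi-projective---subvarieties, is again an algebraic variety.

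For $\mathcal{R}(n,d)$ I would stratify by degree. Writing $V_e:=\mathbb{C}[x_0,\dots,x_n]_e$ and $\mathbb{P}^{M_e}:=\mathbb{P}(V_e^{\oplus(n+1)})$ for the space of tuples $[F_0:\cdots:F_n]$ of forms of degree $e$, each such tuple defines a rational self-map of $\mathbb{P}^n$, and those with $\gcd(F_0,\dots,F_n)=1$---equivalently, those already in reduced form, which define a map of degree exactly $e$---form a subset $U_e\subseteq\mathbb{P}^{M_e}$ in bijection with the rational self-maps of $\mathbb{P}^n$ of degree exactly $e$. The one point to verify is that $U_e$ is open: its complement is the union over $1\le j\le e$ of the images of the everywhere-defined morphisms $\mathbb{P}(V_j)\times\mathbb{P}^{M_{e-j}}\to\mathbb{P}^{M_e}$, $\bigl([G],[H_0:\cdots:H_n]\bigr)\mapsto[GH_0:\cdots:GH_n]$, which are closed since the sources are projective. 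Hence $\mathcal{R}(n,d)=\bigsqcup_{e=1}^{d}U_e$ is a finite disjoint union of quasi-projective varieties, so an algebraic variety (adjoining the constant maps $U_0=\mathbb{P}^n$, if one wishes, changes nothing).

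For $\mathcal{R}^+(n,d)$ I would invoke the classical degree bound: a birational self-map of $\mathbb{P}^n$ of degree $\le d$ has inverse of degree at most an explicit $e'=e'(n,d)$ (one may take $e'=d^{\,n-1}$). Inside $U_e\times\mathbb{P}^{M_{e'}}$ consider the subset $\Sigma_e$ of pairs $(F,G)$, $F=[F_0:\cdots:F_n]$, $G=[G_0:\cdots:G_n]$, for which the composite $F\circ G$ is the identity rational map, i.e.\ $x_i\,F_j(G_0,\dots,G_n)=x_j\,F_i(G_0,\dots,G_n)$ identically for all $i,j$; these are polynomial equations in the coefficients of $F$ and $G$, so $\Sigma_e$ is closed. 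A rational self-map $F$ admitting a rational $G$ with $F\circ G=\mathrm{id}$ is necessarily birational, since it is dominant---hence generically finite---and $(\deg F)(\deg G)=\deg(F\circ G)=1$; conversely the inverse of a birational $F\in U_e$ has degree $\le e'$ and is represented by a point of $\mathbb{P}^{M_{e'}}$. Therefore the image of $\Sigma_e$ under the projection $U_e\times\mathbb{P}^{M_{e'}}\to U_e$ is exactly $\mathcal{R}^+(n,d)\cap U_e$; this projection is proper, so the image is closed in $U_e$, and $\mathcal{R}^+(n,d)$ is a closed subvariety of $\mathcal{R}(n,d)$.

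Finally, every element of $\mathcal{B}(X,Y,d)$ (resp.\ $\mathcal{B}^+(X,Y,d)$) is in particular a rational (resp.\ birational) self-map of $\mathbb{P}^n$ of degree $\le d$, so $\mathcal{B}(X,Y,d)\subseteq\mathcal{R}(n,d)$ and $\mathcal{B}^+(X,Y,d)\subseteq\mathcal{R}^+(n,d)$; and by Theorem~\ref{TheoremMain} these sets are the images of morphisms $\kappa\colon W(X,Y,d)\to\mathcal{R}(n,d)$ and $\kappa^+\colon W^+(X,Y,d)\to\mathcal{R}^+(n,d)$ out of algebraic varieties (that $\kappa,\kappa^+$ are morphisms is read off from their construction in the proof of that theorem). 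By Chevalley's theorem their images are constructible, hence algebraic varieties; alternatively $\mathcal{B}^+(X,Y,d)=\mathcal{B}(X,Y,d)\cap\mathcal{R}^+(n,d)$. The step I expect to be the real obstacle is the one that is \emph{not} done inside this corollary, namely Theorem~\ref{TheoremMain} itself: in contrast with $\mathcal{R}^+(n,d)$, one cannot bound a priori the degree of an extension to $\mathbb{P}^n$ of the inverse of a birational map $F|_X\colon X\dashrightarrow Y$---by \cite{mella-polastri2} such an extension can fail to exist---so the naive ``projection'' argument is unavailable and one genuinely needs the explicit finite-dimensional parametrisation $W(X,Y,d)$. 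Granting that theorem, the remaining work is routine, the one classical ingredient being the degree bound for inverses of Cremona transformations used above.
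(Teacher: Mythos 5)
Your overall plan (build $\mathcal{R}(n,d)$ and $\mathcal{R}^+(n,d)$ by hand, then apply Chevalley's theorem to $\kappa,\kappa^+$) matches the paper's, and your stratified construction of $\mathcal{R}(n,d)$ via the open loci $U_e\subset\mathbb{P}^{M_e}$ is a clean way to unpack what the paper dismisses as ``easy to see.'' For $\mathcal{R}^+(n,d)$, however, you take a genuinely different route from the paper --- the paper re-uses the monoid parametrisation from the proof of Theorem~\ref{TheoremMain} to parametrise birational self-maps of $\mathbb{P}^n$ directly, whereas you invoke the classical bound $\deg(F^{-1})\le d^{n-1}$ and try to cut out $\mathcal{R}^+(n,d)$ as the image of an incidence variety $\Sigma_e$.

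There is a genuine gap in that step. The closed set $\Sigma_e\subset U_e\times\mathbb{P}^{M_{e'}}$ defined by the identities $x_iF_j(G)=x_jF_i(G)$ contains the \emph{degenerate} pairs $(F,G)$ with $F_i(G)\equiv 0$ for all $i$ (equivalently, the image of $G$ lands inside the indeterminacy locus $\mathcal{I}(F)$), and for such pairs $F$ need not be birational. Concretely, take $n=3$, $F=[x_0^2:x_1^2:x_0x_2:x_0x_3]$ (topological degree $2$, not birational, with $\{x_0=x_1=0\}\subset\mathcal{I}(F)$) and $G=[0:0:G_2:G_3]$ for arbitrary forms $G_2,G_3$; then $(F,G)\in\Sigma_2$ but $F\notin\mathcal{R}^+(3,2)$. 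So the image of $\Sigma_e$ in $U_e$ is strictly larger than $\mathcal{R}^+(n,d)\cap U_e$, and your conclusion that $\mathcal{R}^+(n,d)$ is \emph{closed} in $\mathcal{R}(n,d)$ is in fact false: the family $F_t=[x_0:x_1:x_0+tx_2]$ on $\mathbb{P}^2$ is linear and birational for $t\neq 0$ but specialises at $t=0$ to $[x_0:x_1:x_0]$, which is not even dominant. The fix is minor and restores exactly what the corollary asserts: replace $\Sigma_e$ by the open subset $\Sigma_e^\circ=\Sigma_e\setminus\{F_0(G)=\cdots=F_n(G)\equiv 0\}$, whose image in $U_e$ \emph{is} $\mathcal{R}^+(n,d)\cap U_e$; since $\Sigma_e^\circ$ is only quasi-projective you no longer get properness, but Chevalley still gives constructibility, hence $\mathcal{R}^+(n,d)$ is a (generally non-closed) algebraic variety, which is all the corollary needs. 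The remaining part of your argument --- applying Chevalley to $\kappa$ and $\kappa^+$, noting $\mathcal{B}^+(X,Y,d)=\mathcal{B}(X,Y,d)\cap\mathcal{R}^+(n,d)$, and observing that the real content lives in Theorem~\ref{TheoremMain} --- is correct and aligns with the paper.
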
 

Before giving the detail of the proofs of the above results, we introduce some useful lemmas. 

The following result, so-called Andreotti-Bezout inequality in the literature (see e.g. \cite{angeniol}), will be used throughout the paper. (A more general result - applied for intersection of general varieties instead of hypersurfaces, and on a general ambient space not necessarily $\mathbb{P}^N$ but with a non-optimal constant $C$ - is also used at several points in the remaining of this paper and can be deduced from Lemma 4.1 in \cite{truong}.)
\begin{lemma}
Let $\mathcal{I}=<f_1,\ldots ,f_k>$ be an ideal in $\mathbb{P}^N$. Assume that the degrees of $f_1,\ldots ,f_k$ are all bounded from above by a given positive integer $d$. Then there is a constant $C>0$, depending only on $N$, so that the following holds. If $V\subset \mathbb{P}^N$ is any irreducible component of the reduced variety defined by $\mathcal{I}$, then $\deg (V)\leq C\deg (f_1)\ldots \deg (f_k)$.   
\label{Lemma0}\end{lemma}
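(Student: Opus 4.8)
## Proof Strategy for Lemma~\ref{Lemma0}

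The plan is to reduce the bound on the degree of an irreducible component of $V(\mathcal{I})$ to the classical Bézout-type estimate for complete intersections, via a generic linear-section argument. First I would reduce to the case where $V$ is the only component we care about by intersecting with a general linear subspace $L \subset \mathbb{P}^N$ of codimension equal to $\dim(V)$: by the refined Bézout theorem (or a standard Krull-dimension/prime-avoidance argument), a generic such $L$ meets $V$ in exactly $\deg(V)$ reduced points and meets every other component of $V(\mathcal{I})$ in a set of dimension $\le 0$ as well, so it suffices to bound the number of points in $V(\mathcal{I}) \cap L$ that lie on $V$. Restricting the generators $f_1, \dots, f_k$ to $L \cong \mathbb{P}^m$ (where $m = \dim V$) preserves the degree bound $\deg(f_i) \le d$, so we have reduced to the case $\dim V = 0$ and must bound the number of isolated points of $V(f_1, \dots, f_k) \subset \mathbb{P}^m$.

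The key step is then the following: given $k$ homogeneous polynomials on $\mathbb{P}^m$ of degrees $\le d$ cutting out a zero-dimensional scheme (at least locally near the isolated points we care about), one can choose $m$ generic $\mathbb{C}$-linear combinations $g_1, \dots, g_m$ of the $f_i$ whose common zero locus is still zero-dimensional and contains all the isolated points of $V(f_1, \dots, f_k)$. This is again a prime-avoidance / generic-choice argument: at each stage, a generic linear combination of the $f_i$ avoids every minimal prime of the ideal generated so far that has the wrong dimension, except those already forced to be $0$-dimensional. Now $V(g_1, \dots, g_m)$ is a complete intersection in $\mathbb{P}^m$ of hypersurfaces of degree $\le d$, so by the classical Bézout theorem its total length (hence the number of its points, counted without multiplicity) is at most $\prod_{i=1}^{m} \deg(g_i) \le d^m \le d^N$. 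Since each such degree bound is in turn $\le \deg(f_1) \cdots \deg(f_k)$ up to the combinatorial factor coming from how many of the $f_i$ we used, one extracts a constant $C = C(N)$ — for instance $C$ absorbing the loss from $d^m$ versus $\prod \deg(f_j)$ — with $\deg(V) \le C \deg(f_1) \cdots \deg(f_k)$.

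The main obstacle I expect is making the two genericity steps uniform and honest: one must ensure that the generic linear subspace $L$ and the generic linear combinations $g_i$ can be chosen so that \emph{simultaneously} (i) $V \cap L$ has exactly $\deg(V)$ points, (ii) no component of $V(\mathcal{I})$ contributes extra positive-dimensional junk inside $L$, and (iii) the $g_i$ do not accidentally kill one of the points of $V \cap L$. Each of these is an open dense condition in the relevant parameter space (a Grassmannian, resp. an affine space of coefficient tuples), so their intersection is still dense and a valid choice exists over the infinite field $\mathbb{C}$; the care is just in checking that the conditions really are open and dense, which is where one invokes the semicontinuity of fiber dimension and the standard fact that a general hyperplane section of an irreducible variety is irreducible of the expected dimension and degree. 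Once these genericity facts are in hand, the constant $C$ depends only on $N$ (through the exponent $m \le N$ and the number of combinatorial choices), and the argument is complete. I would also remark that the cited reference \cite{angeniol} contains a self-contained treatment, so in the write-up it may suffice to quote it and sketch only the reduction.
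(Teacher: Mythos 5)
The paper does not actually prove Lemma~\ref{Lemma0}: it is stated as the classical Andreotti--B\'ezout inequality and justified by reference to \cite{angeniol} (with the remark that one may take $C=1$), and its generalisation is deferred to Lemma 4.1 of \cite{truong}. So you are attempting a proof where the paper has none; that is legitimate, but your argument has a genuine gap at its crucial step.

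The first reduction (cut $V$ with a generic linear space $L$ of codimension $\dim V$, so that $V\cap L$ is $\deg V$ reduced points and the problem becomes zero-dimensional) is fine, modulo a notational slip: $L\cong \mathbb{P}^{N-m}$, not $\mathbb{P}^m$, and the number of equations needed to cut to dimension $0$ in $L$ is $N-m$, the codimension, not $m$. The serious problem is in the second step. You replace $f_1,\dots,f_k$ by generic $\mathbb{C}$-linear combinations $g_1,\dots,g_{N-m}$ and invoke B\'ezout on the complete intersection $V(g_1,\dots,g_{N-m})$. But a linear combination of the $f_i$ can only have degree $\max_i \deg f_i=:d$ (indeed, a $\mathbb{C}$-linear combination of homogeneous forms of \emph{different} degrees is not even homogeneous, so you must first pad the lower-degree forms up to degree $d$ by multiplying with monomials, which already shows each $g_j$ has degree exactly $d$). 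The B\'ezout bound you obtain is therefore $d^{\,\mathrm{codim}\,V}$, and your claim that this is ``$\le \deg(f_1)\cdots\deg(f_k)$ up to a combinatorial factor depending only on $N$'' is simply false. For example, with one generator of degree $d$ and several linear generators cutting down to codimension $e\ge 2$, your bound is $d^{\,e}$ while $\prod_i\deg(f_i)=d$; the ratio $d^{\,e-1}$ is unbounded as $d\to\infty$ for fixed $N$, so no constant $C=C(N)$ can absorb it. Thus the argument as written does not prove the statement.

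To recover the product $\deg(f_1)\cdots\deg(f_k)$ rather than $(\max_i\deg f_i)^{\mathrm{codim}\,V}$ one must \emph{not} flatten the degrees by taking generic linear combinations. The correct approach is the refined B\'ezout theorem of Fulton, or equivalently the Vogel cycle / St\"uckrad--Vogel sequence construction, in which one orders the $f_i$ by decreasing degree and at each stage perturbs $f_{j}$ only by multiples of $f_1,\dots,f_{j-1}$ of degree $\deg f_j$, thereby preserving the individual degree of each generator through the induction. That is precisely the content that \cite{angeniol} (and, in its non-optimal form, Lemma 4.1 of \cite{truong}) supplies, and it is why the paper cites the result rather than re-deriving it. If you want a self-contained proof you will need to replace the generic-linear-combination step by this degree-respecting construction; the rest of your reduction (generic linear section to dimension zero, genericity/openness arguments) can be retained.
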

\begin{remark} In fact, in Lemma \ref{Lemma0}, we can choose $C=1$. We formulated the lemma this way in order to comply with the general version of it referred to in the paragraph in front of the statement of the lemma. \end{remark}

Consequently, we obtain effective upper bounds for the degree of the graph of a map of bounded degree and for degrees of linear projections. More precisely, we have:
\begin{lemma}
1) Let $X,Y$ be irreducible subvarieties of $\mathbb{P}^n$. Assume that a rational map $f:X\dashrightarrow Y$ is given of degree $\leq d$, that is it is the restriction of a rational self-map of $\mathbb{P}^n$ of degree $\leq d$. Let $\Gamma _f\subset \mathbb{P}^n\times \mathbb{P}^n$ be the graph of $f$. Then the degree of $\Gamma _f$ is effectively bounded in terms of $d$.

2) Let $\pi :\mathbb{C}^n\rightarrow \mathbb{C}^{n-1}$ be the natural projection. Let $Z\subset \mathbb{C}^n$ be an irreducible variety. Then there is a constant $C>0$, independent of $Z$, so that $\deg (\pi (Z))\leq C\deg (Z)$. 
\label{LemmaDegreeGraph}\end{lemma}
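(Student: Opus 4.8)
The plan is to derive both statements from the Andreotti–Bezout inequality (Lemma \ref{Lemma0}) by writing the relevant varieties as components of explicitly described intersections of hypersurfaces of controlled degree. For part (1), I would first fix homogeneous coordinates $[x_0:\cdots:x_n]$ and $[y_0:\cdots:y_n]$ on the two factors of $\mathbb{P}^n\times\mathbb{P}^n$. Since $f$ extends to a self-rational map $F=[F_0:\cdots:F_n]$ of $\mathbb{P}^n$ with each $F_i$ homogeneous of degree $\le d$, the graph $\Gamma_f$ is an irreducible component of the variety cut out by the bihomogeneous equations $y_iF_j(x)-y_jF_i(x)=0$ for $0\le i<j\le n$, together with the equations defining $X$ in the $x$-variables (one then takes the component dominating $X$, i.e.\ not contained in the base locus $\{F_0=\cdots=F_n=0\}$). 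These equations have bidegree at most $(d,1)$ in $(x,y)$, hence degree at most $d+1$ after embedding $\mathbb{P}^n\times\mathbb{P}^n\hookrightarrow\mathbb{P}^{(n+1)^2-1}$ by the Segre map (the Segre embedding multiplies bidegrees into a single degree, and a bilinear form of bidegree $(a,b)$ pulls back from a hypersurface of degree $\max(a,b)$, or one can clear to bidegree $(d,d)$ and get degree $d$ on the Segre variety). Applying Lemma \ref{Lemma0} in this projective space — or rather the more general version referred to just before it, since the Segre variety is not all of $\mathbb{P}^N$ — bounds $\deg(\Gamma_f)$ by a constant depending on $n$ times a product of the degrees of these defining equations, which is a quantity bounded explicitly in terms of $d$ and the degree of $X$. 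Because the degree of $X$ is itself part of the given data (or can be bounded by Lemma \ref{Lemma0} from the degrees of its defining equations), this yields the desired effective bound.

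For part (2), the statement is the affine analogue: the linear projection $\pi\colon\mathbb{C}^n\to\mathbb{C}^{n-1}$ forgetting the last coordinate. I would pass to projective closures, $\overline{Z}\subset\mathbb{P}^n$ and the rational projection $\overline\pi\colon\mathbb{P}^n\dashrightarrow\mathbb{P}^{n-1}$ from the point $p=[0:\cdots:0:1]$. The closure of $\pi(Z)$ in $\mathbb{P}^{n-1}$ is $\overline{\overline\pi(\overline Z)}$. If $p\notin\overline Z$, then $\overline\pi$ restricts to a finite morphism on $\overline Z$ and the image has degree equal to $\deg(\overline Z)$ divided by the (generic) number of preimages, hence $\deg(\overline\pi(\overline Z))\le\deg(\overline Z)=\deg(Z)$. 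If $p\in\overline Z$, one can still bound the image degree by $\deg(\overline Z)$: either project from a nearby point and take limits, or argue that the image is a component of the intersection of $\overline Z$ with the generic fibres, invoking Lemma \ref{Lemma0}. Either way one gets $\deg(\pi(Z))\le C\deg(Z)$ with $C$ depending only on $n$ (in fact $C=1$ works, matching the remark after Lemma \ref{Lemma0}). Since $\deg(Z)=\deg(\overline Z)$ for the closure, nothing is lost in passing between the affine and projective settings.

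The main obstacle I anticipate is bookkeeping around the Segre embedding in part (1): one must be careful that the bihomogeneous forms $y_iF_j-y_jF_i$ have unequal bidegrees $(d,1)$, so to present $\Gamma_f$ honestly as a component of an intersection of hypersurfaces in a single projective space one should first homogenize them to a common bidegree — e.g.\ multiply by suitable powers of the $y$-coordinates to reach bidegree $(d,d)$, at the cost of introducing extraneous components supported on $\{y_0=\cdots=y_n=0\}$, which is empty, so in fact no harm is done — and then check that the Segre image of a bidegree-$(d,d)$ hypersurface is cut out on the Segre variety by a hyperplane section composed with the $d$-th Veronese, so its degree on the Segre variety is a controlled multiple of $d$. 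This is entirely routine but is the one place where an incautious degree computation could spoil the ``effectively bounded'' claim; everything else reduces cleanly to Lemma \ref{Lemma0} and its stated generalization.
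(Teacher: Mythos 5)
Your part (1) is essentially the paper's argument: both present $\Gamma_f$ as a proper component of the intersection of the hypersurfaces $y_iF_j - y_jF_i = 0$ (of controlled bidegree) with $X\times Y$ and then invoke Andreotti--Bezout. The paper does it in two stages --- first bound $\Gamma_F$ via Lemma~\ref{Lemma0}, then bound $\Gamma_f$ as a proper component of $\Gamma_F\cap(X\times Y)$ using the product-variety version cited from \cite{truong} --- rather than running the Segre bookkeeping by hand, but that is only a difference of packaging.

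Part (2) has a genuine gap: the case $p=[0:\cdots:0:1]\in\overline Z$ is not actually handled. Your first suggestion, to project from a nearby point and take limits, needs a semicontinuity-of-degree argument you do not give, and it is further complicated by the fact that $\dim\pi(\overline Z)$ can be strictly smaller than $\dim\overline Z$ (when $\overline Z$ is a cone with vertex $p$), so the degenerating family of images does not simply specialize to the cycle $\overline{\pi(Z)}$. Your second suggestion, that the image is ``a component of the intersection of $\overline Z$ with the generic fibres,'' does not parse: the image lives in $\mathbb{P}^{n-1}$, not in $\mathbb{P}^n$, so it cannot be a component of any intersection with $\overline Z$. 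The paper's proof avoids casework altogether: setting $k=\dim Z$ and $l=\dim\pi(Z)$, it slices $\overline Z$ by $k-l$ generic hyperplanes $H'\subset\mathbb{P}^n$ so that $(H')^{k-l}\cdot\overline Z$ pushes forward to a cycle dominating $\overline{\pi(Z)}$, and then applies the projection formula,
$\deg\overline{\pi(Z)}=H^l\cdot\overline{\pi(Z)}\le H^l\cdot\pi_*\bigl((H')^{k-l}\cdot\overline Z\bigr)=\pi_*\bigl((H')^{k-l}\cdot(\pi^*H)^l\cdot\overline Z\bigr)\le C\deg\overline Z$,
which treats the finite-morphism case, the $p\in\overline Z$ case, and the dimension-drop case uniformly in a single Bezout computation. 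You should either adopt this intersection-theoretic argument or supply a complete proof for $p\in\overline Z$.
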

\begin{proof}
1) We can proceed as follows. Let $x_0,x_1,\ldots, x_n$ be the homogeneous coordinates for the copy of $\mathbb{P}^n$ containing $X$, and $y_0,y_1,\ldots ,y_n$ be the homogeneous coordinates for the copy of $\mathbb{P}^n$ containing $Y$. Let $F=[F_0:\ldots :F_n]$ be a rational map from $\mathbb{P}^n$ to itself whose degree is $\leq d$ and whose restriction to $X$ is $f$. Then $F_i$'s are homogeneous polynomials of degree $\leq d$ in $x_0,\ldots ,x_n$. Hence, the graph $\Gamma _F$ is a component where the intersection of the $n$ hypersurfaces $y_iF_j-y_jF_i=0$ is {\bf proper} - that is of correct dimension - where the indices run on all $i,j$ for which both $F_i$ and $F_j$ are non-zero. Since the degrees of these hypersurfaces are bounded explicitly in terms of $d$, and the number of these hypersurfaces is also explicitly bounded, we have that the degree of $\Gamma _F$ is explicitly bounded, by applying Lemma \ref{Lemma0}. Next, since $\Gamma _f$ is one component where $\Gamma _F$ and $X\times Y$ intersect properly, it follows from Lemma 4.1 in \cite{truong} that the degree of $\Gamma _f$ is also explicitly bounded.

2) Let $\overline{Z}\subset \mathbb{P}^n$ be the closure of $\overline{Z}$. Then $\overline{Z}$ has the same degree as that of $Z$.  Similarly, if $\overline{\pi (Z)}\subset \mathbb{P}^{n-1}$, then $\overline{\pi (Z)}$ has the same degree as that of $\pi (Z)$. Let $H'\subset \mathbb{P}^n$ and $H\subset \mathbb{P}^{n-1}$ be generic hyperplanes. Let $k=\dim (Z)$ and $l=\dim (\pi (Z))$. Then $(H')^{k-l}$ and $\overline{Z}$ intersect properly, and $\pi ((H')^{k-l}.\overline{Z})$ contains $\overline{\pi (Z)}$. We have 
\begin{eqnarray*}
\deg (\overline{\pi (Z)})=H^{l}.\overline{\pi (Z)}\leq H^l. \pi _*((H')^{k-l}.\overline{Z})=\pi _*((H')^{k-l}.\pi ^*(H).\overline{Z}). 
\end{eqnarray*}
From this, we obtain the desired conclusion $\deg (\pi (Z))\leq C\deg (Z)$. 
\end{proof}

For the next lemma, we first introduce the notation and some basic properties of monoids, which are important in the proof. For more detail, see for example Sections 1.2 and 1.3 in \cite{ciliberto-cueto-mella-ranestad-zwiernik}.  We use the notation $(0)_p=(0,\ldots ,0)$ ($p$ times). We also use combinations of these, such as $((0)_p,1,(0)_q)$ and $((0)_p,(0)_q,1)=((0)_{p+q},1)$. 

A hypersurface $M\subset \mathbb{P}^r$ (where $r\geq 2$) of degree $d$ is a monoid with vertex $p$ if it is {\bf irreducible} and $p$ is a point in $M$ of multiplicity exactly $d-1$. If we choose the coordinates for $\mathbb{P}^r$ so that $p=[0:\ldots :0:1]$, then the defining equation for $M$ is $f_{d-1}(x_0,\ldots ,x_{r-1})x_r+f_d(x_0,\ldots ,x_{r-1})$. Here $f_{d-1}$ is a homogeneous polynomial of degree $d-1$ and $f_{d}$ is a homogeneous polynomial of degree $d$. The assumption that $M$ is irreducible is equivalent to that $GCD(f_{d-1},f_d)=1$.      

The projection from $p$ gives rise to a birational map between a monoid $M\subset \mathbb{P}^r$ and the projective space $\mathbb{P}^{r-1}$. More precisely, the projection map is $\pi :M\dashrightarrow \mathbb{P}^{r-1}$ given by $[x_0:\ldots :x_{r-1}:x_r]\mapsto [x_0:\ldots :x_{r-1}]$, and the inverse $\pi ^{-1}:\mathbb{P}^{r-1}\dashrightarrow M$ is given by $$[x_0:\ldots :x_{r-1}]\mapsto [f_{d-1}x_0:\ldots :f_{d-1}x_{r-1}:-f_d].$$
From these formulas, it is obvious that $\pi$ maps $M\backslash \{f_{d-1}=f_d=0\}=M\backslash \{f_{d-1}=0\}$ isomorphically to its image $\mathbb{P}^{r-1}\backslash \{f_{d-1}=0\}$. We note also that the indeterminacy set of $\pi $ is $p$, and the indeterminacy set of $\pi ^{-1}$ is $\{f_{d-1}=f_d=0\}$. Since $M$ is the cone over $\{f_{d-1}=f_d=0\}$ with vertex $p$, it follows that for a subvariety $p\not= A\subset M$, the cone $C_p(A)$ over $A$ with vertex $p$ is contained in $M$ iff the image $\pi (A\backslash\{p\} )\subset \mathbb{P}^{r-1}$ is in the indeterminacy set of $\pi ^{-1}$. 

Note that a monoid $M$ can have two or more vertices (for example, the monoid $M_{0,0}$ in Step 3 of the proof below). If a monoid $M\subset \mathbb{P}^r$ has two vertices $[0:\ldots :0:1]$ and $[0:\ldots :0:1:0]$, then it has a defining equation of the form: $$f_{d}(x_0,\ldots ,x_{r-2})+x_{r-1}g(x_0,\ldots ,x_{r-2})+x_rh_{d-1}(x_0,\ldots ,x_{r-2})+x_rx_{r-1}f_{d-2}(x_0,\ldots ,x_{r-2})=0.$$  
In this case, if we compose the birational map $\mathbb{P}^{r-1}\dashrightarrow M$ of the projection from one vertex and the projection $M\dashrightarrow \mathbb{P}^{r-1}$ from the other vertex of $M$, then we obtain a biration map $\mathbb{P}^{r-1}\dashrightarrow \mathbb{P}^{r-1}$.  Moreover, we note that in this case the monoid is irreducible, as an algebraic variety, iff $GCD(f_{d}(x_0,\ldots ,x_{r-2})+x_{r-1}g(x_0,\ldots ,x_{r-2}), h_{d-1}(x_0,\ldots ,x_{r-2})+x_{r-1}f_{d-2}(x_0,\ldots ,x_{r-2}))=1$ (apply the case of monoids with only one vertex above). 

The following effective versions of Lemma 2.1 and 2.2 in \cite{ciliberto-cueto-mella-ranestad-zwiernik} are also important in the proof of Theorem  \ref{TheoremMain}. The fact that the bound is independent of the vertex $p$ is crucial.
\begin{lemma}
1) Let $Z\subset \mathbb{P}^r$, with $r\geq 3$, be an irreducible variety of dimension $1\leq s\leq r-2$ and let $p\in \mathbb{P}^r$ be such that the projection of $Z$ from $p$ is birational to its image. Then for $d$ large enough, depending only on {\bf the degree} of $Z$ and $r$, there is a monoid in $\mathbb{P}^r$ of degree $d$ with vertex $p$, containing $Z$ but not containing the cone $C_p(Z)$ over $Z$ with vertex $p$. 

2) Let  $Z\subset \mathbb{P}^r$, with $r\geq 3$, be an irreducible variety of dimension $1\leq s \leq r-3$, and let $p_1,p_2\in \mathbb{P}^r$ be distinct points such that the projection of $Z$ from the line going through $p_1$ and $p_2$ is birational to its image. Then, with the same number $d$ as in part 1, there is a monoid in $\mathbb{P}^r$ of degree $d$ with vertices $p_1$ and $p_2$, containing $Z$ but not containing the cones $C_{p_i}(Z)$ over $Z$ with vertices $p_i$ (for $i=1,2$).    
\label{LemmaMonoidals}\end{lemma}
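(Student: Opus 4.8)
The plan is to reduce both parts to the one-vertex existence statement for monoids of bounded degree, and to track the degree bound through the projection so that it depends only on $\deg(Z)$ and $r$, never on the location of $p$ (or of $p_1,p_2$). For part 1, after choosing coordinates so that $p=[0:\ldots:0:1]$, a monoid with vertex $p$ of degree $d$ is a polynomial $f_{d-1}(x_0,\ldots,x_{r-1})x_r+f_d(x_0,\ldots,x_{r-1})$, and the conditions ``contains $Z$'' and ``does not contain $C_p(Z)$'' translate, via the projection $\pi$ from $p$, into conditions on the hypersurface $\{f_{d-1}=0\}$ and the pair $(f_{d-1},f_d)$ relative to $\overline{\pi(Z)}\subset\mathbb{P}^{r-1}$. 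Concretely: $Z\subset M$ forces $f_{d-1}|_Z$ and $f_d|_Z$ to be proportional along $Z$ in the way dictated by $\pi^{-1}$, while not containing the cone $C_p(Z)$ is exactly the condition that $f_{d-1}$ does not vanish identically on $C_p(Z)$, i.e. $f_{d-1}$ is not identically zero on $\pi(Z\setminus\{p\})$. So the construction I would give is: pick $f_{d-1}$ to be a generic form of degree $d-1$ vanishing on $\overline{\pi(Z)}$ but not on all of $\mathbb{P}^{r-1}$ (possible once $d-1$ is at least the Castelnuovo--Mumford regularity of $\overline{\pi(Z)}$, which by Lemma \ref{LemmaDegreeGraph}(2) and Lemma \ref{Lemma0} is bounded in terms of $\deg(Z)$ and $r$ alone), and then solve for $f_d$ so that the resulting hypersurface contains $Z$; the point $p\in M$ has multiplicity exactly $d-1$ because the lowest-degree part of the equation at $p$ is $f_{d-1}$, which is nonzero. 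Irreducibility of $M$ amounts to $\gcd(f_{d-1},f_d)=1$, which holds for a generic such choice.

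The heart of the argument, and the place where I expect the main difficulty, is producing $f_d$ of degree exactly $d$ with $f_{d-1}x_r+f_d$ vanishing on $Z$ while keeping everything effective and uniform in $p$. The obstruction is cohomological: one needs $H^1$ of the ideal sheaf of $Z$ (twisted appropriately) to vanish in degree $d$, and — crucially — a bound on the twist that is uniform over all $Z$ of fixed degree in $\mathbb{P}^r$. This is precisely the effective Castelnuovo--Mumford regularity input: the regularity of an equidimensional (or even arbitrary) subscheme of $\mathbb{P}^r$ of degree $\delta$ is bounded by an explicit function of $\delta$ and $r$ (e.g. of the Bayer--Mumford/Gruson--Lazarsfeld--Peskine type), and applying this both to $\overline{\pi(Z)}$ and to $Z$ itself gives a single $d$ that works. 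I would phrase the lemma's ``$d$ large enough, depending only on the degree of $Z$ and $r$'' as exactly this regularity threshold (plus one, to have room for the extra factor $x_r$), and verify that none of the intermediate objects — $\deg(\overline{\pi(Z)})\le C\deg(Z)$ by Lemma \ref{LemmaDegreeGraph}(2), its regularity, the dimension of the space of forms $f_{d-1}$, and the dimension of the space of forms $f_d$ with prescribed vanishing — depends on $p$ in any way beyond the choice of coordinates.

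For part 2, I would iterate: choosing coordinates with $p_1=[0:\ldots:0:1]$ and $p_2=[0:\ldots:0:1:0]$, a monoid with both vertices has the displayed form $f_d+x_{r-1}g+x_rh_{d-1}+x_rx_{r-1}f_{d-2}$ in the variables $x_0,\ldots,x_{r-2}$. The projection from the line $\overline{p_1p_2}$ sends $Z$ birationally (by hypothesis) to $\overline{\pi(Z)}\subset\mathbb{P}^{r-2}$, whose degree and regularity are again bounded in terms of $\deg(Z)$ and $r$ only, using Lemma \ref{LemmaDegreeGraph}(2) twice. One then picks $f_{d-2}$ and the linear-in-$x_{r-1}$ and linear-in-$x_r$ coefficients to vanish on the appropriate projected loci (with $\gcd$ of the two factors displayed in the excerpt equal to $1$ for a generic choice, giving irreducibility) and solves for $f_d$ so the hypersurface contains $Z$; the condition $s\le r-3$ is exactly what guarantees that the projection from a line still lands in a space of dimension $\ge s+1$, so $\overline{\pi(Z)}$ is a proper subvariety and nontrivial forms vanishing on it exist. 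The not-containing-cones conditions for $i=1,2$ become, as in part 1, non-vanishing of the relevant coefficient forms on $\pi(Z)$, satisfied generically. Since the same regularity bound governs both projections, the single value of $d$ from part 1 suffices, as claimed. The one subtlety to check carefully is that the multiplicity of each $p_i$ on the resulting hypersurface is exactly $d-1$ and not more — i.e. that the degree-$(d-1)$ part of the equation at $p_i$ is genuinely nonzero — which again follows from the generic non-vanishing of $f_{d-2}$, $g$, $h_{d-1}$ on the projected varieties.
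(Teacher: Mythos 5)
Your proposal takes a genuinely different route from the paper — you try to construct the monoid directly by choosing $f_{d-1}$ and then solving for $f_d$, using Castelnuovo--Mumford regularity bounds to make the construction effective. The paper instead blows up $\mathbb{P}^r$ at $p$, considers the strict transform $Z'$ and the linear system $d(H-E)$, and proves by a short-exact-sequence induction on hyperplane sections an explicit polynomial bound (of degree $s$ with leading coefficient $\delta/s!$, $\delta\le\deg Z$) on $h^0(Z',\sO_{Z'}(d(H-E)))$; this bound is then fed into the dimension count of \cite{ciliberto-cueto-mella-ranestad-zwiernik}, Lemma 2.1. For part 2 the paper constructs cones $S_1,S_2$ over monoids produced by part 1 inside the target hyperplanes of the two projections and takes a generic combination, rather than directly writing a two-vertex monoid.

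However, there is a fatal internal contradiction in your construction of part 1. You correctly state, two sentences earlier, that (given $Z\subset M$) the condition ``$C_p(Z)\not\subset M$'' is exactly that $f_{d-1}$ does \emph{not} vanish identically on $\pi(Z\setminus\{p\})$. But then you pick $f_{d-1}$ to be a form \emph{vanishing on} $\overline{\pi(Z)}$. These are opposite conditions. And the consequence is decisive: if $f_{d-1}|_{\pi(Z)}\equiv 0$, then $f_{d-1}x_r|_Z\equiv 0$, so $Z\subset M=\{f_{d-1}x_r+f_d=0\}$ forces $f_d|_Z\equiv 0$, hence $f_d|_{\pi(Z)}\equiv 0$ as well (since $f_d$ depends only on $x_0,\ldots,x_{r-1}$). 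Then both $f_{d-1}$ and $f_d$ vanish on $\pi(Z)$, which is exactly the condition for $M$ to contain the cone $C_p(Z)$ — the opposite of what the lemma demands. (In the extreme case $s=r-2$, where $\pi(Z)\subset\mathbb{P}^{r-1}$ is a hypersurface, this would also force $\gcd(f_{d-1},f_d)\ne 1$, so $M$ would not even be an irreducible monoid.) The correct direction is to take $f_{d-1}$ \emph{not} vanishing on $\pi(Z)$; but then the existence of a matching $f_d$ with $f_d|_Z=-f_{d-1}x_r|_Z$ becomes the real issue, since $-f_{d-1}x_r|_Z$ is a section of $\sO_Z(d)$ coming from a polynomial on $\mathbb{P}^r$ that involves $x_r$, and you would need it to also be the restriction of a polynomial in $x_0,\ldots,x_{r-1}$ only. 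This is a genuine linear-algebra/cohomology obstruction, and the dimension-count argument of the paper (and of the cited reference) is precisely how one shows it can be satisfied for large $d$. As written, your construction does not engage with that obstruction at all, and once the sign of the $f_{d-1}$-condition is corrected, the ``solve for $f_d$'' step is not justified.

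Because part 2 is built on part 1 (both in your proposal and in the paper), the same gap propagates there.
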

 \begin{proof}
 1) Let $\pi :V\rightarrow \mathbb{P}^r$ be the blowup of $\mathbb{P}^r$ at $p$. Let $Z'\subset V$ be the strict  transform of $Z$. From the proof of Lemma 2.1 in \cite{ciliberto-cueto-mella-ranestad-zwiernik}, it suffices to show the following:  $h^0(Z',O_{Z'}(d(H-E)))$ is bounded explicitly in terms of $d$ and $(H-E)^{s}\cdot Z'$. (The latter is smaller than or equal to the degree of $Z$. To see this claim about the bound for $(H-E)^{s}\cdot Z'$, we expand (here $C(s,j)$ are binomial numbers)
 \begin{eqnarray*}
 (H-E)^{s}\cdot Z'&=&H^{s}\cdot Z'+\sum _{j=1}^{s}(-1)^jC(s,j)H^{s-j}\cdot E^{j}\cdot Z'\\
 &=&\deg (Z)+\sum _{j=1}^{s}(-1)^jC(s,j)H^{s-j}\cdot E^{j}\cdot Z',
 \end{eqnarray*}
 and note that for all $j\geq 1$ the class of $E^j$ is $(-1)^{j-1}$ of a linear subspace in $E$, and the intersections between linear subspaces of $E$ and $Z'$ are psef. Therefore, $\sum _{j=1}^{s}(-1)^jH^{s-j}\cdot E^{j}\cdot Z'\leq 0$ and $ (H-E)^{s}\cdot Z'\leq \deg (Z)$.)  
 
 More precisely, putting $\delta =(H-E)^{s}.Z'\geq 1$, we will show the following: There is a polynomial $p_{s,\delta }(d)$ in the variable $d$, of degree $s$, whose all coefficients are explicitly bounded in terms of $s$ and $\delta$, whose coefficient of $d^{s}$ is $\delta /s!$, and so that $h^0(Z',O_{Z'}d(H-E))\leq p_{s,\delta }(d)$ for all non-negative integers $d\geq 0$.  
 
 Note that since the linear system  $H-E$ is nef and big, and is also movable (having no base locus, this can be seen by observing that the strict transform in $V$ of a hyperplane in $\mathbb{P}^r$ containing $p$ is an element of the linear system $|H-E|$), we have a SES
 \begin{eqnarray*}
 0\rightarrow O_{Z'}(d(H-E))\rightarrow O_{Z'}((d+1)(H-E))\rightarrow O_{Z_1}((d+1)(H-E))\rightarrow 0,
 \end{eqnarray*}
 where $Z_1=Z'\cap S$ ($S$ is a generic element of $|H-E|$) is a variety of dimension $=$ $\dim (Z')-1$ $=s-1$, and $(H-E)^{s-1}.Z_1=(H-E)^{s}.Z'$. Therefore, we have a LES
 \begin{eqnarray*}
 0&\rightarrow& H^0(O_{Z'}(d(H-E)))\stackrel{i_1}{\rightarrow} H^0(O_{Z'}((d+1)(H-E)))\stackrel{i_2}{\rightarrow} H^0(O_{Z_1}((d+1)(H-E)))\\
 &\rightarrow& H^1(O_{Z'}(d(H-E)))\rightarrow \ldots 
 \end{eqnarray*}
 From this we obtain 
 \begin{eqnarray*}
 h^0(O_{Z'}((d+1)(H-E)))&=&\dim (ker (i_2))+\dim (im (i_2))\\
 &=&\dim (im (i_1))+\dim (im (i_2))\\
   &\leq& h^0(O_{Z'}{d(H-E)})+h^0(O_{Z_1}(d+1)(H-E)).
 \end{eqnarray*}
 
 Summing this over $d$, we obtain 
 \begin{eqnarray*}
 h^0(O_{Z'}d(H-E))\leq \sum _{j=0}^{d}h^0(O_{Z_1}j(H-E)).
 \end{eqnarray*}
 
 Then we have the conclusion by induction on the dimension of $Z_1$. In fact, the base case of dimension $0$ is obvious, in which case we obtain that $Z_1$ is a union of $\delta $ points, and hence $h^0(O_{Z_1}d(H-E))=\delta $ for all non-negative integers $d$. If we define by induction the following sequence of polynomials: $q_{0}(d)=1$ for all $d$, and for $m\geq 1$ 
 \begin{eqnarray*}
 q_m(d)=\sum _{j=0}^dq_{m-1}(j),
 \end{eqnarray*}
 then we have that $q_m(d)$ is a polynomial of degree $m$ in $d$ whose coefficient of $d^m$ is $1/m!$. (The first several elements in the sequence are listed below: 
 \begin{eqnarray*}
 q_0(d)&=&1,\\
 q_1(d)&=&d+1,\\
 q_2(d)&=&\frac{d(d+1)}{2}+d+1.)
 \end{eqnarray*}
Moreover, we have from the above arguments that 
\begin{eqnarray*}
h^0(Z',O_{Z'}d(H-E))\leq \delta q_{s}(d)
\end{eqnarray*}
for all non-negative integers $d$. Therefore, the choice of $p_{s,\delta }(d)=\delta q_{s}(d)$ satisfies the claim. 

2) This follows from 1) as in the proof of Lemma 2.2 in \cite{ciliberto-cueto-mella-ranestad-zwiernik}. In fact, let $H_1$ and $H_2$ be the hyperplanes in $\mathbb{P}^r$ so that the projection from $p_1$ maps onto $H_1$ and the projection from $p_2$ maps onto $H_2$. Let $Z_1\subset H_1$ and $Z_2\subset H_2$ be images of $Z$ under the mentioned projections. Then $\deg (Z_1), \deg (Z_2)\leq \deg (Z)$, being (a component of) the (proper) intersection between a cone over $Z$ and $H_1$ (respectively $H_2$). Since $\dim (Z)\leq r-3$, we have that $\dim (Z_1)\leq \dim (H_1)-2$ and $\dim (Z_2)\leq \dim (H_2)-2$. Moreover, if $p_1'$ is the image of $p_1$ under the projection from $p_2$, then the projection under $p_1'$ of $Z_2$ is birational to its image. Similarly for the image $p_2'$ of $p_2$ under the projection from $p_1$.  We can then apply part 1 to obtain a monoid $S_1'$ in $H_1$ of degree $d$ with vertex $p_2'$,  containing $Z_1$ but not the cone $C_{p_2'}(Z_1)$. Let $S_1$ be the cone over $S_1'$ with vertex $p_1$. Then $S_1$ is a monoid of the same degree $d$ whose vertices include all points on the line contacting $p_1$ and $p_2'$, and hence in particular include $p_1$ and $p_2$. Moreover, $S_1$ contains $Z$ but not the cone over $Z$ with vertex $p_2$. We construct a similar monoid $S_2$. Then a generic linear combination between $S_1$ and $S_2$ gives the desired answer.        
\end{proof}

Here is the proof of Theorem \ref{TheoremMain}. It gives an algorithm to construct the systems of polynomial equations. For the convenience of the readers, we will present the algorithm explicitly afterwards.  

\begin{proof}[Proof of Theorem \ref{TheoremMain}] We first give the conclusion for the set $\mathcal{B}(X,Y,d)$. By a linear change of coordinates, we may assume that none of $X,Y$ belongs to the hyperplane at infinity of $\mathbb{P}^n$.  
For simplicity, from now on (except when otherwise indicated) we will work with the Zariski open dense sets $\mathbb{C}^k$ of $\mathbb{P}^k$ only. 

{\bf Step 1} (Forward argument):  If $X=Y=\mathbb{P}^n$ then there is nothing to do. Hence we can assume that $\dim (X)=\dim (Y)\leq n-1$. Since $n\geq 2$ and $\dim (\Gamma _f)=\dim (X)$, it follows that $\Gamma _f\subset \mathbb{C}^n\times \mathbb{C}^n$ is of codimension at least $3$. Hence, by Lemma \ref{LemmaMonoidals}, there is a monoid $M_{0,0}\subset \mathbb{C}^n\times \mathbb{C}^n$  of degree effectively bounded by the degree of $\Gamma _f$, containing $\Gamma _f$ and has two vertices $((0)_{n-1},1,(0)_n)$ and $((0)_n, (0)_{n-1},1)$. Note that the projections from  $((0)_n, (0)_{n-1},1)$ and $(1,(0)_{n-1},(0)_n)$ are birational from $\Gamma _f$ onto its image. Let $\Gamma _{0,1}\subset \mathbb{C}^n\times \mathbb{C}^{n-1}$ and $\Gamma _{1,0}\subset \mathbb{C}^{n-1}\times \mathbb{C}^n$ be the corresponding images of $\Gamma _f$. Their degrees are effectively bounded in terms of the degree of $\Gamma _f$ (see part 2 of Lemma \ref{LemmaDegreeGraph}) and hence, by part 1 of Lemma \ref{LemmaDegreeGraph}, in terms of $d$. 

 Similarly, we can find a monoid $M_{0,1}\subset \mathbb{C}^n\times \mathbb{C}^{n-1}$ of degree effectively bounded by the degree of $\Gamma _{0,1}$ and has the vertex $((0)_n, (0)_{n-2},1)$. Again, note that the projection from the point $((0)_n, (0)_{n-2},1)$ is birational from $\Gamma _{0,1}$ to its image $\Gamma _{0,2}\subset \mathbb{C}^n\times \mathbb{C}^{n-2}$. Similarly, we construct $M_{1,0}\subset \mathbb{C}^{n-1}\times \mathbb{C}^n$ and $\Gamma _{2,0}\subset \mathbb{C}^{n-2}\times \mathbb{C}^n$. 

Repeating the above process, we obtain monoids $M_{0,j}\subset \mathbb{C}^n\times \mathbb{C}^{n-j}$, $M_{j,0}\subset \mathbb{C}^{n-j}\times \mathbb{C}^n$ and the birational images of $\Gamma _f=\Gamma _{0,0}$: they are $\Gamma _{0,j}\subset \mathbb{C}^n\times \mathbb{C}^{n-j}$ and $\Gamma _{j,0}\subset \mathbb{C}^{n-j}\times \mathbb{C}^{n}$. Note that $\Gamma _{0,n}=X$ and $\Gamma _{n,0}=Y$. 

{\bf Step 2} (Backward argument - The main part of the proof) Now to prove Theorem \ref{TheoremMain}, we {\bf reverse} the above argument, and proceed as follows. 

We do not know the graph $\Gamma _f$ and consequently, the monoids $M_{0,j}$ and $M_{j,0}$ above. Also we do not know the intermediate $\Gamma _{j,0}$ and $\Gamma _{0,j}$, except that $\Gamma _{0,n}=X$ and $\Gamma _{n,0}=Y$. 

We do, however, {\bf know} that the degrees of the monoids $M_{0,j}$ and $M_{j,0}$ are effectively bounded. We also know that these monoids are in fixed variables $x_1,\ldots ,x_n, y_1,\ldots ,y_{n-j}$ and $x_1,\ldots ,x_{n-j},y_1,\ldots ,y_n$. Therefore, we can parametrise them in terms of their coefficients. The number of these coefficients is effectively bounded. 

Now we start the process of writing out the equations promised in the theorem. We will start from X, and then go step by step through all $\Gamma _{0,j}$ and $\Gamma _{j,0}$, at each step add some polynomial equations, and end up at $Y$. At that stage we have the needed polynomial systems.

 We start from $X=\Gamma _{0,n}\subset \mathbb{C}^n$ and want to go up to $\Gamma _{0,n-1}\subset \mathbb{C}^n\times \mathbb{C}$. We do not know about $\Gamma _{0,n-1}$, but we know that $\Gamma _{0,n-1}\subset M_{0,n-1}$ and $X$ is the birational image of $\Gamma _{0,n-1}$ via the projection from the point $((0)_n,1)$. We next show that this can be described in terms of some polynomial equations.

Using the homogeneous coordinates, the monoid $M_{0,n-1}$ is given by an equation $$f_{0,n-1}(x_0,\ldots ,x_n,y_1)=f_{0,n-1,1}(x_0,\ldots ,x_{n})y_1+f_{0,n-1,2}(x_0,\ldots ,x_n)=0.$$ We can dehomogenise to get an equation in affine coordinates $x_1,\ldots ,x_n,y_1$. Knowing the bound on the degree of $f_{0,n-1}$, we know how many parameters (which are the coefficients of the monomials in the equation for the given monoid) we will need. 

 From Lemma \ref{LemmaMonoidals}, we see that $X$ is the birational image of some $\Gamma _{0,n-1}\subset M_{0,n-1}$ under the projection from $((0)_n,1)$ exactly when $X\not\subset \{f_{0,n-1,1}=f_{0,n-1,2}=0\}$. We note that in $M_{0,n-1}$, {\bf set theoretically} the locus $\{f_{0,n-1,1}=f_{0,n-1,2}=0\}$ is the same as the hypersurface $\{f_{0,n-1,1}=0\}$. Provided this condition is satisfied, then $\Gamma _{0,n-1}$ will be the {\bf strict transform} in $M_{0,n-1}$ of $X$, under the inverse of the projection map $\pi$. Hence $\Gamma _{0,n-1}$ is contained in the total inverse image $\pi ^{-1}(X)\cap M_{0,n-1}$. If $X$ is defined by an ideal $I(X)=\{g_1,\ldots ,g_m\}$, then the total inverse image of $X$ is given by the ideal $\{g_1,\ldots ,g_m, f_{0,n-1,1}y_1+f_{0,n-1,2}\}$. However, this set usually is bigger than what we want (the variety $\Gamma _{0,n-1}$), and it will make later computations and arguments harder. So, one {\bf key idea} here is to consider only the preimage $H_{0,n-1}$ of the Zariski open set $X\backslash \{f_{0,n-1,1}=0\}$ of $X$, which is given by the ideal $\{g_1,\ldots ,g_m,f_{0,n-1,1}y_1+f_{0,n-1,2},1-t_{0,n-1}f_{0,n-1,1}\}$. (Using the common trick, we added a variable $t_{0,n-1}$.)  This latter set $H_{0,n-1}$ is a Zariski open dense set of the hypothetical $\Gamma _{0,n-1}$, and it is isomorphic (to see this, note that the projection map from $((0)_{n},1)$ is an isomorphism between $M_{0,n-1}\backslash \{f_{0,n-1}=0\}$ onto its image)  to the Zariski dense open set $X\backslash \{f_{0,n-1,1}=0\}$ of $X$, provided that $X\backslash \{f_{0,n-1,1}=0\}$ is non-empty and the following conditions on the monoid are satisfied. 

\begin{itemize}
\item The first condition is that both $f_{0,n-1,1}$ and $f_{0,n-1,2}$ are non-zero polynomials. The condition that $f_{0,n-1,1}$ is a non-zero polynomial is already taken care (provided $H_{0,n-1}$ is non-empty) by the equation $1-t_{0,1}f_{0,n-1,1}=0$ in the defining equations for $H_{0,n-1}$. For the condition that $f_{0,n-1,2}$ is non-zero, we need only that at least one of the coefficients $a_I$ of some monomial $x^I$ is non-zero, and this condition can be again described using the trick of adding one new variable $s_I$ so that $1-s_Ia_I=0$. So an explicitly bounded number of such equations will cover our case. 

\item The second condition is that  the monoid $M_{0,n-1}$ should be {\bf irreducible}, which is the same as that $GCD(f_{0,n-1,1},f_{0,n-1,2} )=1$. We will show that this condition is also taken care by the equation $1-t_{0,n-1}f_{0,n-1,1}=0$ already given above. In fact, assume that $GCD(f_{0,n-1,1},f_{0,n-1,2})=h_{0,n-1}$. Then, we can write $f_{0,n-1,1}=h_{0,n-1}w_{0,n-1,1}$ and $f_{0,n-1,2}=h_{0,n-1}w_{0,n-1,2}$ where $GCD(w_{0,n-1,1},w_{0,n-1,2})=1$. Then, the equation $$0=1-t_{0,n-1}f_{0,n-1,1}=1-t_{0,n-1,1}h_{0,n-1,1}w_{0,n-1,1}$$ of $H_{0,n-1}$ implies that in fact $H_{0,n-1}$ is contained in the set $\{h_{0,n-1,1}\not= 0\}$, and hence is contained in the irreducible monoid $w_{0,n-1,1}y_{1}+w_{0,n-1,2}=0$.    
\end{itemize}

Iterating this argument, we can go back further and introduce some explicitly bounded number of systems of equations (each time at one monoid), to go back from $X\subset \mathbb{C}^n$ to some $H_{0,1}\subset \mathbb{C}^n\times \mathbb{C}^{n-1}$. By a similar argument (but a bit more complicated, see below), we go back to some $H_{0,0}\subset \mathbb{C}^n\times \mathbb{C}^n$. $H_{0,0}$ is then a Zariski dense open set of the hypothetical graph $\Gamma _f$. The difference between $H_{0,0}$ and the other $H_{0,j}$ (where $1\leq j\leq n-1$) is that while the other $H_{0,j}$'s belong to monoids with only one vertex, $H_{0,0}$ belongs to a monoid with 2 vertices. The equation for such a monoid is 
\begin{eqnarray*}
&&f_{0,0,1}(x_1,\ldots ,x_{n-1},y_1,\ldots ,y_{n-1})+x_n g_{0,0}(x_1,\ldots ,x_{n-1},y_1,\ldots ,y_{n-1})\\
&&+y_nh_{0,0}(x_1,\ldots ,x_{n-1},y_1,\ldots ,y_{n-1})+x_ny_nf_{0,0,2}(x_1,\ldots ,x_{n-1},y_1,\ldots, y_{n-1})\\
&=&f_{0,0,1}+x_ng_{0,0}+y_n(h_{0,0}+x_nf_{0,0,2})\\
&=&f_{0,0,1}+y_nh_{0,0}+x_n(g_{0,0}+y_nf_{0,0,2}).
\end{eqnarray*}
Here the last two equalities express the same monoid regarded as a monoid of either one of the two vertices in concern. As before, $H_{0,0}$ is not contained in either of the bad sets of the projections from these two vertices, that is $H_{0,0}\not\subset \{f_{0,0,1}+x_ng_{0,0}=0\}\cup \{f_{0,0,1}+y_nh_{0,0}+x_n(g_{0,0}=0\}$. As before, we can consider only the complement in of the latter set, which can be described in terms of polynomial equations: $1-t_{0,0}(f_{0,0,1}+x_ng_{0,0})(f_{0,0,1}+y_nh_{0,0}+x_n(g_{0,0})=0$, where $t_{0,0}$ is a new variable.

Now, we see that the graph $\Gamma _f$, and hence the birational map $f$, exists iff the following two conditions are satisfied: 

\begin{itemize}
\item Condition 1: At least one among the many systems of equations which we produced for $H_{0,0}$ in the above has a non-empty solution set. (Each such solution corresponds to one $H_{0,0}$, but the correspondence may not be 1-to-1.) If this is the case, then from the construction it is obvious that $H_{0,0}$ is birational to $X$. The latter means  exactly that $H_{0,0}$ is the graph of a rational map from $X$ into $\mathbb{P}^n$. 

\item Condition 2: For at least one system of equations in Condition 1 which has a non-empty solution set, the corresponding $H_{0,0}$ is birational to $Y$ via the second projection $\mathbb{C}^n\times \mathbb{C}^n\rightarrow \mathbb{C}^n$. This means exactly that $H_{0,0}$ is the graph of a birational map from $X$, and the image of that map is $Y$. 
\end{itemize}

Towards Condition 2, we go step by step, using monoids on the $Y$-side: $M_{0,0}$, $M_{1,0}$ and so on. 

We can apply the same argument employed when we went up from $X=\Gamma _{0,n}$ to $\Gamma _{0,n-1}$. Here we describe how to go from $H_{0,0}\subset \mathbb{C}^n\times \mathbb{C}^n$ to $H_{1,0}\subset \mathbb{C}^n\times \mathbb{C}^n$ (important note: $H_{1,0}$ is still in $\mathbb{C}^n\times \mathbb{C}^n$), the latter being isomorphic to a dense set of the image of $H_{0,0}$ in $\mathbb{C}^{n-1}\times \mathbb{C}^n$ under the projection from $(1,(0)_{n-1},(0)_n)$. (Note {\bf the  difference} here is that we do not need to compute the image of $H_{0,0}$ under the concerned linear projection, which is quite complicated - for example not Zariski closed - in particular when we have parameters in the defining equations.) This $H_{1,0}$ is isomorphic to its image via the projection from the point $(1,(0)_{n-1},(0)_n)$, and its image is contained in $M_{1,0}$. We will, as when going up from $X$ to $\Gamma _{0,n-1}$, consider only a Zariski open set of $H_{0,0}$ lying inside the monoid $M_{0,0}$ where the projection is an isomorphism to its image. We also need to check that the image is contained in the monoid $M_{1,0}$, and this can be done by checking that for all polynomials $h$ in a defining ideal of $M_{1,0}$, the hypersurface $h=0$ (now considered as a hypersurface in $\mathbb{C}^n\times \mathbb{C}^n$ in stead of $\mathbb{C}^{n-1}\times \mathbb{C}^n$) contains the variety $H_{0,0}$. (In effect, we are checking that the preimage under the projection from $(1,(0)_{n-1},(0)_n)$ of $M_{1,0}$ contains as a set the variety $H_{0,0}$). We will show that this can be done by adding some polynomials, as below in the check of whether the image of $H_{n,0}$ belongs to $Y$.    

At the end of this induction process, we get some $H_{n,0}\subset \mathbb{C}^n\times \mathbb{C}^n$ which is birational to $H_{0,0}$ and hence to $X$. We need only to check that the image of $H_{n,0}$ is  a subset of $Y$ (since then, because $H_{n,0}$ and $Y$ have the same dimension, and $H_{n,0}$ is birational to its image, we get that $H_{n,0}$ is birational equivalent to $Y$, and are done). 

Now, we proceed to show that whether the image of $H_{0,0}$ under the projection to the second factor $\mathbb{C}^n\times \mathbb{C}^n\rightarrow \mathbb{C}^n$ belongs to $Y$ is determined by some polynomial equations. This is the same  as requiring the preimage of $Y$ under this projection contains $H_{n,0}$. This reduces to the following question: Let $H_{n,0}$ be defined by an ideal $\{h_1,\ldots ,h_k\}$ in variables $x_1,\ldots ,x_n,y_1,\ldots ,y_n, s_I$  (depending on parameters as well). Let $h$ be a polynomial (which we can think as an element in the ideal of $Y$). We need  to show that whether the set $\{h_1=\ldots =h_k=0\}$ is contained in $h=0$ is described by some polynomial equations. 

To this end, we use the following common method. Working in a ring $\mathbb{C}[w_1,\ldots ,w_N]$. The set $\{h_1=\ldots =h_k=0\}$ is contained in $h=0$ iff in $\mathbb{C}[w_1,\ldots ,w_N,a]$, where $a$ is a new variable, the function $1$ is in the ideal $<h_1,\ldots ,h_k,1-ah>$. For the readers' convenience, we recall here the classical argument. By Hilbert's Nullstellensatz, $\{h_1=\ldots =h_k=0\}$ is contained in $\{h=0\}$ iff there is some positive integer $j$ so that in $\mathbb{C}[w_1,\ldots ,w_N]$ the polynomial $h^j$ is in the ideal $<h_1,\ldots ,h_k>$. Then in the polynomial ring $\mathbb{C}[w_1,\ldots ,w_N,a]$
\begin{eqnarray*}
1=(1-a^jh^j)+a^jf^j=(1-ah)(1+ah+a^2h^2+\ldots +a^{j-1}h^{j-1})+a^jh^j
\end{eqnarray*}
is in the ideal $<h_1,\ldots ,h_k,1-ah>$.) By effective Hilbert's Nullstellensatz \cite{hermann, brownawell, kollar}, there will be polynomials $\tau _1,\ldots ,\tau _k, \tau \in \mathbb{C}[w_1,\ldots ,w_N,a]$ with explicitly bounded degrees so that 
\begin{eqnarray*}  
1\equiv \tau _1h_1+\ldots +\tau _kh_k+\tau (1-ah)
\end{eqnarray*}
in the ring $\mathbb{C}[w_1,\ldots ,w_N,a]$. Note that $\tau _1,\ldots ,\tau _k$ will have coefficients not yet determined, and we determine them by balancing the coefficients of the polynomials on the two sides of the above identity. We then get a system of polynomial equations in the coefficients of the concerned polynomials, which we need to have at least a solution. 

Combining the above steps, we have several systems of polynomials in the parameter spaces (the coefficients of the monoids $M_{i,j}$ and the coefficients of the polynomials $\tau _i$), at least one among them has a solution if we have a birational map of degree $\leq d$ from $X$ to $Y$. Conversely, any such a solution will provide us with a birational map from $X$ to $Y$ (even though may be of degrees much bigger than $d$). 

The only thing needs to check is whether the above constructed birational maps (corresponding to these solutions) are of degree $\leq d$. That is, it remains for us to check whether at least one of these birational maps from $X\dashrightarrow Y$ comes from a global rational map $F:\mathbb{P}^n\dashrightarrow \mathbb{P}^n$ of degree  $\leq d$ actually. We show that this can be detected by some polynomial equations and proceed as follows. That a birational map $X\dashrightarrow Y$ which we constructed above comes from a rational map $F:\mathbb{P}^n\dashrightarrow \mathbb{P}^n$ is the same as that the variety $H_{0,0}\subset \mathbb{C}^n\times \mathbb{C}^n$ constructed above is contained in the graph $\Gamma _F$. Writing in the affine set $\mathbb{C}^n\times \mathbb{C}^n$ (recalling our convenience from the beginning of this proof), the ideal of $\Gamma _F$ is $<y_1h(x_1,\ldots ,x_n)-g_1(x_1,\ldots ,x_n),\ldots ,y_nh(x_1,\ldots ,x_n)-g_n(x_1,\ldots ,x_n)>$, where $g_1,\ldots ,g_n$ and $h$ are polynomials in $\mathbb{C}[x_1,\ldots ,x_n]$ of degree $\leq d$. Again, these polynomials are not yet determined, but we can parametrise them in terms of their coefficients, and the number of these coefficients are efficiently bounded by the degree $d$. We can check whether $\Gamma _F$ contains $H_{0,0}$ by using the procedure above in checking whether the image of $X$ is contained in $Y$. This creates new polynomial equations. We also need to check that $X$ is not contained in the indeterminacy locus of $F$, which is the same as $H_{0,0}\backslash \{h(x_1,\ldots ,x_n)=0\}$ is non-empty. Adding a new variable $t$, the latter is the same as the following conclusion: the system of polynomial equations $\{H_{0,0},1-th(x_1,\ldots ,x_n)=0\}$ has at least one solution. 

We add all of these new equations into the equations we already constructed above. Therefore, by taking the zero set of these systems of polynomial equations, we have a finite dimensional variety $W(X,Y,d)$ (depending on the variables $x_1,\ldots ,x_n,y_1,\ldots ,y_n$ as well as coefficients of the involved monoids and some other parameters, as stipulated above) together with the surjective map $\kappa$ as in the theorem. A priori, two distinct points in $W(X,Y,d)$ may give rise to the same birational map $X\dashrightarrow Y$. We can also use Gr\"obner bases \cite{gunther, buchberger} to eliminate the variables $x_1,\ldots ,x_n,y_1,\ldots ,y_n$ from $W(X,Y,d)$ to obtain systems of equations in the parameters only, and hence obtain a (generally reducible) variety $\widetilde{W}(X,Y,d)$ of finite dimension in parameters. It is clear that the parameters which give us birational maps between $X$ and $Y$ is a dense subset $\widetilde{W_0}(X,Y,d)$ of the variety $\widetilde{W}(X,Y,d)$.  

To finish off, we give the proof for the conclusion concerning the set $\mathcal{B}^+(X,Y,d)$. Our starting point is the equations in the previous paragraph. We only need to add the equations which the coefficients of a rational map $F:\mathbb{P}^n\dashrightarrow \mathbb{P}^n$ in the above must satisfy in order for $F$ to be a birational map. This can be done similarly to the above proof, with the following modifications. Here we construct a monoid $M'_{0,0}$ for  $\Gamma _F$ with {\bf only one vertex} $(1,(0)_{n-1},(0)_n)$. This is because we only need to check that the projection to the second factor $\mathbb{C}^n\times \mathbb{C}^n$ will give us a birational map, and hence we do not need to consider the vertex $((0)_n,(0)_{n-1},1)$ as before. Since $\Gamma _F$ is of codimension $n\geq 2$ in $\mathbb{C}^n\times \mathbb{C}^n$, we can apply Lemma \ref{LemmaMonoidals}. We need to add the equations which check that the monoid $M'_{0,0}$ contains $\Gamma _F$. Then we can keep follow the argument we gave for $H_{0,0}$, until we reach the isomorphic image $H'_{n-1,0}$ in $\mathbb{C}\times \mathbb{C}^n$ of a Zariski open set of $\Gamma _F$. At this step, we cannot apply Lemma \ref{LemmaMonoidals}, since the dimension of $\Gamma _F$ is $n$ and the dimension of $\mathbb{C}\times \mathbb{C}^n$ is $n+1$. To be able to finish this last step, we simply add one more dimension so that the assumptions in Lemma \ref{LemmaMonoidals} are satisfied, concerning $\mathbb{C}\times \mathbb{C}^n$ as a hyperplane $w=0$ in a bigger space $\mathbb{C}\times \mathbb{C}^n\times \mathbb{C}_w$. Then we can construct a monoid in $\mathbb{C}\times \mathbb{C}^n\times \mathbb{C}_w$ for $H'_{n-1,0}$, and then the proof is finished by checking that $\mathbb{C}^n=\mathbb{C}^n\times \{w=0\}\subset \mathbb{C}^n\times \mathbb{C}_w$ contains the image of $H'_{n-1,0}$ under the projection from the point $(1,(0)_n,w=0)$.  

{\bf Addendum.}  Note that by a linear change of coordinates, we can assume that $X$ and $Y$ are not contained in any coordinate hyperplane. If in Step 2 we consider the smaller Zariski open subset $X\backslash \{f_{0,n-1,1}f_{0,n-1,2}=0\}$, which corresponds to considering the ideal $$\{g_1,\ldots ,g_m,f_{0,n-1,1}y_1+f_{0,n-1,2},1-t_{0,n-1}f_{0,n-1,1}f_{0,n-1,2}\},$$ then we can reduce the number of systems of equations regarding $\Gamma _{0,n-1}$ to $1$. Consequently, this reduces the number $C_1$ in the conclusion of Theorem \ref{TheoremMain}. Note, however, that in doing so we increase (double) the degree of the polynomials involved and consequently the number $C_3$.  
\end{proof} 
 
The following algorithm is a spin-off of the proof of Theorem \ref{TheoremMain}. 
 
{\bf Algorithm.} Detecting the existence of bounded birational maps between given varieties.  
\begin{itemize}
\item[] {\bf Input.} Two irreducible subvarieties $X$ and $Y$ of $\mathbb{P}^n$ of the same dimension, and a positive integer $n$.

\noindent {\bf Output.} An answer Yes or No to the question of whether there are birational maps $f:X\dashrightarrow Y$ which are restrictions of rational maps $F:\mathbb{P}^n\dashrightarrow \mathbb{P}^n$ whose degree $\deg (F)$ is $\leq d$. 

\item Step 1: Use a linear change of coordinates so that both $X$ and $Y$ do not belong to any coordinate hyperplane of $\mathbb{P}^n$. 

\item Step 2: Compute the explicit constant $C_1=C_1(d)>0$  (from part 2 of  Lemma \ref{LemmaDegreeGraph}) with the following property: Whenever $Z\subset \mathbb{C}^n\times \mathbb{C}^{n}$ is an irreducible variety of degree $\leq d$, and $\pi _j:\mathbb{C}^n\times \mathbb{C}^n\rightarrow \mathbb{C}^n\times \mathbb{C}^j$ is the natural linear projection (where $0\leq j\leq n$), then $\deg (\pi _j(Z))\leq C_1$. 

 \item Step 3: With respect to $C_1$ (considered as the degree of $Z$ in the statement of Lemma \ref{LemmaMonoidals}), compute the explicit constant $C_2$ (considered as the number $d$ in the statement of Lemma \ref{LemmaMonoidals}).  
 
 \item Step 4: Now we start to construct the systems of polynomial equations. We start with $S_{0,n}=\{g_1,\ldots ,g_m\}$, where $g_1,\ldots ,g_m$ is a basis for the ideal $I(X)$ defining $X$ in $\mathbb{C}^n$.
 
 \item Step 5: For each $j=1,2,\ldots ,n-1$, we construct the system $S_{0,n-j}$ as follows.
 
 The variables of $S_{0,n-j}$ are the union of the variables of $S_{0,n-j+1}$, the coefficients of $f_{0,n-j,1}$ and $f_{0,n-j,2}$, and an extra variable $t_{0,n-j}$. Here, $f_{0,n-j,1}$ is a general polynomial of degree $C_2-1$ in variables $x_1,\ldots ,x_n,y_1,\ldots ,y_{j-1}$, and $f_{0,n-j,2}$ is a general polynomial of degree $C_2$ in variables $x_1,\ldots ,x_n,y_1,\ldots ,y_{j-1}$. 
 
 The equations of $S_{0,n-j}$ are the union of the equations of $S_{0,n-j+1}$ and $2$ extra equations: $f_{0,n-j,1}y_j+f_{0,n-j,2}$ and $1-t_{0,n-j}f_{0,n-j,1}f_{0,n-j,2}$. (These two extra equations represent certain Zariski open dense sets of the involved monoids.)
 
\item Step 6: We now construct a system of polynomials $S_{0,0}$, which is more special than the $S_{0,j}$'s in Step 5. 

The variables of $S_{0,0}$ are the union of the variables of $S_{0,1}$, the coefficients of $f_{0,0,1}$, $g_{0,0}$, $h_{0,0}$ and $f_{0,0,2}$, and two extra variables $t_{0,0,1}$ and $t_{0,0,2}$. Here $f_{0,0,1}$, $g_{0,0}$, $h_{0,0}$ and $f_{0,0,2}$ are all general polynomials in unknowns $x_1,\ldots ,x_{n-1}, y_1,\ldots ,y_{n-1}$, of degrees $C_2$, $C_2-1$, $C_2-1$ and $C_2-2$ respectively.   

The equations of $S_{0,0}$ are the union of the equations of $S_{0,1}$ and $3$ extra equations: $f_{0,0,1}+x_ng_{0,0}+y_nh_{0,0}+x_ny_nf_{0,0,2}=0$, $1-t_{0,0,1}(f_{0,0,1}+x_ng_{0,0})(h_{0,0}+x_nf_{0,0,2})=0$ and $1-t_{0,0,2}(f_{0,0,1}+y_nh_{0,0})(g_{0,0}+y_nf_{0,0,2})=0$. 
 
 This $S_{0,0}$  - in case non-empty - represents certain non-empty Zariski open sets of graphs of rational maps from $X$ to $\mathbb{C}^n$. Each $S_{0,n-j}$ in Step 5 then represents the image of $S_{0,0}$ under the natural linear projections $\mathbb{C}^n\times \mathbb{C}^n\rightarrow \mathbb{C}^n\times \mathbb{C}^j$. All $S_{0,n-j}$ - when non-empty - are birational to $X$. 
 
 \item Step 7: We now continue to add more equations and variables. For each $j=1,2,\ldots ,n$ we construct the system $S_{j,0}$ as follows. 
 
 The variables of $S_{j,0}$ are the union of the variables of $S_{j-1,0}$, the coefficients of $f_{j,0,1}$ and $f_{j,0,2}$, two extra variables $t_{j,0}$ and $a$, together with coefficients of other polynomials $\tau _{j,1,0},\ldots ,\tau _{j,1,m(j)}$ and $\tau _{j,2,0}, \ldots ,\tau _{j,2,m(j)}$. Here $f_{j,0,1}$ is a general polynomial of degree $C_2-1$ in variables $y_1,\ldots ,y_n,x_{n-j-1},\ldots ,x_1$, and $f_{j,0,2}$ is a general polynomial of degree $C_2$. Here $m(j)=$ the number of polynomials in $S_{j-1,0}$, and $\tau _{j,1,0},\ldots ,\tau _{j,1,m(j)}$ and $\tau _{j,2,0}, \ldots ,\tau _{j,2,m(j)}$ are general polynomials in the variables of $S_{j,0}$ and $a$. The degrees of $\tau _{j,1,0},\ldots ,\tau _{j,1,m(j)}$ and $\tau _{j,2,0}, \ldots ,\tau _{j,2,m(j)}$ are determined from the effective Hilbert's Nullstellensatz, related to the two extra equations in the next paragraph. 
 
 The equations of $S_{j,0}$ are the union of the polynomials $h_1,\ldots ,h_{m(j)}$ of $S_{j-1,0}$ and $2$ extra equations: 
 \begin{eqnarray*}
 &&-1+\tau _{j,1,0}(1-a(f_{j,0,1}x_j+f_{j,0,2}))+\tau _{j,1,1}h_1+\ldots \tau _{j,1,m(j)}h_{m(j)}=0,\\
 &&-1+\tau _{j,2,0}(1-a(1-t_{j,0}f_{j,0,1}f_{j,0,2}))+\tau _{j,2,1}h_1+\ldots \tau _{j,2,m(j)}h_{m(j)}=0.
 \end{eqnarray*} 
 
 The above 2 equations check that $\{h_1=\ldots =h_m=0\}$ belongs to both the monoid $(f_{j,0,1}x_j+f_{j,0,2})=0$ and the set $f_{j,0,1}f_{j,0,2}\not= 0$ where the projection from the monoid has good properties.  Recall from the classical fact used in the proof of Theorem \ref{TheoremMain} that to check whether $\{h_1=\ldots =h_m=0\}$ belongs to the set $\{h=0\}$, we add a new variable $a$ and use the effective Hilbert Nullstellensatz to reduce the question to the existence of polynomials $\tau ,\tau _1,\ldots ,\tau _m$ (in the variables of the polynomials $h_1,\ldots ,h_m,h$ and the new variable $a$) of effectively bounded degrees so that $1=\tau (1-ah)+\tau _1 h_1+\ldots +\tau _mh_m$. 
 
 At the end of Step 7, we obtain a system of polynomial equations $S_{n,0}$ - which, in case non-empty - represents certain non-empty Zariski open subset of the images of $X$ under birational maps. 
 
\item Step 8: We are now ready to construct the final system of polynomial equations $S$. 

The variables of $S$ are the union of the variables of $S_{n,0}$,  the coefficients of $F_0,F_1,\ldots ,F_n$, an extra variable $a$, together with the coefficients of polynomials $\tau _{i,j,k}$ coming from effective Hilbert Nullstellensatz (more detail below). Here $F_0,F_1,\ldots ,F_n$ are general polynomials of degree $\leq d$ in variables $x_1,\ldots ,x_n$. 

The equations of $S$ are the union of equations in $S_{n,0}$ and the ones coming from effective Hilbert Nullstellensatz when we want to check that the set defined by $S_{n,0}$ is contained in the set $\{y_1F_0-F_1=0,\ldots, y_nF_0-F_n=0, 1-aF_0=0\}$ and $Y$. The number of  these extra equations equals the sum of $n+1$ (which is the number of polynomials in $\{y_1F_0-F_1=0,\ldots, y_nF_0-F_n=0, 1-aF_0=0\}$) and the number of a basis of the ideal defining $Y$. Each of these polynomials will contain the set $S_{n,0}$ and leads to one equation as we mentioned in Step 7. 

This $S$ represents the fact that the birational maps induced by solutions of $S_{n,0}$ are restrictions of rational selfmaps of $\mathbb{P}^n$ degrees $\leq d$, and maps $X$ onto $Y$. 

\item Step 9: Now we can use Gr\"oebner bases to solve the system of polynomial equations $S$. If $S$ has at least one solution, then the output of the algorithm is Yes, that is there is a birational map of degree $\leq d$ from $X$ onto $Y$. If, on the contrary, $S$ has no solution, then the output of the algorithm is No.  
\end{itemize}

Now we give the proof of Corollary \ref{CorollaryMain}. 
\begin{proof}[Proof of Corollary \ref{CorollaryMain}]
That $\mathcal{R}(n,d)$ is an algebraic variety is easy to see. The last paragraph of Step 12 in the proof of Theorem \ref{TheoremMain} shows that $\mathcal{R}^+(n,d)$ is an algebraic variety and can be explicitly constructed. 

The proof of Theorem \ref{TheoremMain} shows that $W(X,Y,d)$ is a subvariety of $\mathbb{C}^N\times \mathcal{R}(n,d)$ for some integer $N$, and the map $\kappa :W(X,Y,d)\rightarrow \mathcal{B}(X,Y,d)$ is simply the restriction to $W(X,Y,d)$ of the projection $\mathbb{C}^N\times \mathcal{R}(n,d)\rightarrow \mathcal{R}(n,d)$. Therefore, by Chevalley's theorem, the subset $\mathcal{B}(X,Y,d)$ of $\mathcal{R}(n,d)$ is constructible. The fact that a constructible set is an algebraic variety is clear. In fact, such a set is a finite union of sets of the form $A\cap B$, where $A$ is a closed subvariety of $\mathcal{R}(n,d)$ and $B$ is the complement of a hypersurface. By adding a new variable, as in the proof of Theorem \ref{TheoremMain}, we can write explicitly the ideal defining $B$. 

The proof for the subset $\mathcal{B}^+(X,Y,d)$ of $\mathcal{R}^+(n,d)$ is similar. 
\end{proof}

\begin{remark} We list some final remarks in this section. \end{remark}

\begin{itemize}
\item  For a {\bf given} rational map $f:X\dashrightarrow Y$, it has been known for some decades that Gr\"obner bases can be used to check whether it is a birational map (see e.g. \cite{shannon-sweedler} for the special case when $Y$ is a projective space). However, as far as we know, our algorithm in the proof of Theorem \ref{TheoremMain} is the first one to treat the case of determining the set of all birational maps from $X$ to $Y$, which need to deal with maps whose coefficients are {\bf undetermined}. This poses difficulties which do not arise in the case of explicit maps. 

\item While the bounds in the proof of Theorem \ref{TheoremMain} are explicit, they are quite big. For example, the degree bound in the effective Hilbert Nullstellensatz is exponential, and the degree bound for the graph $\Gamma _f$ is polynomial. Therefore, more refinements of the given algorithm are needed before it can be used for practical examples. Also, with the ever improvement in computer hardwares and softwares, we hope that in a not far future the theoretical aspects of results in this paper can be put into practice. 

\item As the Addendum in the proof of Theorem \ref{TheoremMain} shows, there are many choices of such $W(X,Y,d)$ and $\kappa$. We know that whatever the choice of $W(X,Y,d)$ is, its image (as a set) by $\kappa$ is always $\mathcal{B}(X,Y,d)$. By Corollay \ref{CorollaryMain}, $\mathcal{B}(X,Y,d)$ is an algebraic variety. Hence, we can choose $\mathcal{B}(X,Y,d)$ as a canonical choice for all such $W(X,Y,d)$'s. Using the recent result \cite{harris-michalek--sertoz} on computing images of polynomial maps, we would again be able to explicitly describe $\mathcal{B}(X,Y,d)$ in terms of $X,Y$ and $d$. The same consideration can also be applied to $\mathcal{B}^+(X,Y,d)$.  

If we are interested only in finding an answer to the Bounded Birational Problem, we may alternatively proceed in the following simpler manner. We start with any of the $W(X,Y,d)$ given in Theorem \ref{TheoremMain}. Then elimination (by using for example Gr\"obner bases) gives us explicitly a closed subvariety $\mathcal{C}(X,Y,d)$ of $\mathcal{R}(n,d)$. While $\mathcal{C}(X,Y,d)$ may not be reduced, its reduced structure is exactly $\overline{\mathcal{B}(X,Y,d)}$. Again, the reduced structure of $\mathcal{C}(X,Y,d)$ can be explicitly constructed using Gr\"obner bases. In general, this $\overline{\mathcal{B}(X,Y,d)}$ may be bigger than $\mathcal{B}(X,Y,d)$, and hence a specific point in $\overline{\mathcal{B}(X,Y,d)}$ may not give rise to a birational map from $X$ to $Y$. However, still we have that $\mathcal{B}(X,Y,d)\not= \emptyset$ if and only if $\overline{\mathcal{B}(X,Y,d)}\not= \emptyset$. Therefore, $\overline{\mathcal{B}(X,Y,d)}$ can still be used to answer the Bounded Birationality Problem, and from the above analysis we may regard $\overline{\mathcal{B}(X,Y,d)}$ as another {canonical choice} concerning this aspect. 
\end{itemize}

\section{A rough strategy towards the birationality problem via Iitaka's fibrations}
In this section, we state a rough strategy, based on Theorem \ref{TheoremMain}, for a computational approach towards the birationality problem.  

We first recall briefly about the fact that the birationality problem for varieties of general type over $\mathbb{C}$ was completely solved about 10 years ago. We thank John Christian Ottem for communicating this remark. Note that the birationality problem for smooth surfaces of general type is {\bf computable}. For any smooth surface of general type $X$, the linear system $|5K_X|$ is a birational embedding of $X$ into some projective space. Given two smooth projective surfaces of general type $X,Y$, if $h^0(X,5K_X)\not= h^0(Y,5K_Y)$, then they are not birationally equivalent. In the case $h^0(X,5K_X)= h^0(Y,5K_Y)=N$, let $X'$ be the birational embedding of $X$ in $\mathbb{P}^N$ using the linear system $|5K_X|$ and $Y'$ be the birational embedding of $Y$ in $\mathbb{P}^N$ using the linear system $|5K_Y|$. If $f:X\dashrightarrow Y$ is birational, then $f^*:H^0(X,5K_X)\rightarrow H^0(Y,5K_Y)$ is isomorphic, and hence there is a linear map in $\mathbb{P}^N$ which maps $X'$ onto $Y'$. The latter question is computable. Similarly, the question of whether two smooth complex projective varieties of {\bf general type} in higher dimension are birational equivalent is {\bf decidable}. This follows from the following result in \cite{hacon-mckernan, takayama, tsuji1, tsuji2}: Let $X$ be a smooth irreducible projective variety of dimension $k$. Then there is a positive number $r_k>0$ depending only on $k$ such that the linear system $|r_kK_X|$ is a birational embedding. If $k>2$, it is not known whether this question is again {\bf computable}, since the above number $r_k$ is not yet explicitly determined. 

Note that in the proof of Theorem \ref{TheoremMain}, the assumption that the birational map $f:X\dashrightarrow Y$ is the restriction of a rational map $F:X\dashrightarrow Y$ of degree $\leq d$ is needed only to deduce that the degree of the graph $\Gamma _g$ is explicitly bounded. Therefore, the birationality problem is solved if the following question has an affirmative answer.
 
\begin{question}[Main Question] Given $X,Y\subset \mathbb{P}^n$ be irreducible varieties, where $n\geq 2$, and assume that there is a birational map $f:X\dashrightarrow Y$. Is there another birational map $g:X\dashrightarrow Y$ so that the degree of the graph $\Gamma _g$, viewed as a subvariety of $\mathbb{P}^n\times \mathbb{P}^n$, is explicitly bounded in terms of $n$ and the ideals for $X$ and $Y$?
\label{Question1}\end{question} 
 
A special case when Question \ref{Question1} is answered in the affirmative is when $X$ is a monoid and $Y$ is a projective space, see the description before the proof of Theorem \ref{TheoremMain}. The bound in degree is the degree of the monoid. Less trivially, we have the following result. 
 
\begin{proposition}

1) To solve Question \ref{Question1}, it is sufficient to do it for the case where $X$ and $Y$ are either: 

a) Hypersurfaces;

or

b) Normal varieties.

2) Over $\mathbb{C}$, Question \ref{Question1} has an affirmative answer when $X$ and $Y$ are varieties of general type.

3) Question \ref{Question1} has an affirmative answer when $X$ and $Y$ are curves. 
\label{PropositionMain}\end{proposition}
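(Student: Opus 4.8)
The plan is to reduce each case to a situation where the graph of \emph{some} well-chosen birational map has controlled degree, invoking either the structure theory already recalled in the excerpt or a blow-down/projection trick.

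For part (1), the strategy is to replace $X$ and $Y$ by birational models that are either hypersurfaces or normal. First I would pass to the normalizations $\widetilde{X}\to X$ and $\widetilde{Y}\to Y$: these are normal, birational to $X$ and $Y$ respectively, and the normalization morphism is finite, so it can be realized inside a projective space of controlled dimension with controlled degree (using Lemma \ref{Lemma0} applied to the equations cutting out the conductor, or by a Veronese–projection argument). A birational map $f:X\dashrightarrow Y$ induces $\widetilde{f}:\widetilde{X}\dashrightarrow\widetilde{Y}$, and conversely; moreover the degree of $\Gamma_f$ and the degree of $\Gamma_{\widetilde f}$ are comparable up to a factor depending only on the degrees of the normalization maps, which are themselves bounded in terms of the original data by Lemma \ref{LemmaDegreeGraph} and Lemma \ref{Lemma0}. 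This gives the reduction to (b). For (a), the idea is to project a general birational model to a hypersurface: a variety of dimension $k$ embedded in $\mathbb{P}^n$ admits a generic linear projection to $\mathbb{P}^{k+1}$ which is birational onto its (hypersurface) image, and by part 2 of Lemma \ref{LemmaDegreeGraph} the degree of the image hypersurface is bounded by a constant times the degree of the source; the graph of the composed birational map then has bounded degree by the same reasoning used in part 1 of Lemma \ref{LemmaDegreeGraph}. So it suffices to answer Question \ref{Question1} for the hypersurface models, giving the reduction to (a).

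For part (2), I would use the effective-birationality results for varieties of general type recalled just before Proposition \ref{PropositionMain}. Over $\mathbb{C}$, if $X$ has dimension $k$ then $|r_k K_X|$ gives a birational embedding $X\dashrightarrow X'\subset\mathbb{P}^{N}$ with $N=h^0(X,r_kK_X)-1$, and likewise $Y\dashrightarrow Y'\subset\mathbb{P}^{N}$ (if the two $h^0$'s differ, $X$ and $Y$ are not birational and there is nothing to prove). The key point is that a birational map $f:X\dashrightarrow Y$ respects the canonical rings, so it carries $|r_kK_X|$ to $|r_kK_Y|$ and therefore descends to a \emph{linear} automorphism $g':\mathbb{P}^N\dashrightarrow\mathbb{P}^N$ restricting to a birational map $X'\dashrightarrow Y'$; a linear map has graph of degree exactly $1$ (or bounded by a small universal constant) as a subvariety of $\mathbb{P}^N\times\mathbb{P}^N$. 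One must then relate $X',Y'\subset\mathbb{P}^N$ back to the given $X,Y\subset\mathbb{P}^n$: the canonical-image embedding and its inverse are given by forms whose degrees are bounded in terms of $r_k$ and the ideals of $X,Y$ (the pluricanonical system is computable from a resolution, and even without explicit bounds on $r_k$ one gets \emph{decidability} rather than computability, which is all Question \ref{Question1} asks for in the general-type case as phrased). Composing, the degree of the graph of the resulting birational map $X\dashrightarrow Y$ is bounded in terms of $n$ and the defining ideals.

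For part (3), the curve case, the plan is even more elementary: a birational map between irreducible curves is the same as an isomorphism of their function fields, hence the same as a biregular isomorphism of their normalizations $\widetilde{X}\cong\widetilde{Y}$, which are smooth projective curves of a common genus $\mathfrak g$. If $\mathfrak g\ge 2$ use the tricanonical (or higher) embedding $|3K|$; if $\mathfrak g=1$ use a degree-$3$ embedding; if $\mathfrak g=0$ both normalizations are $\mathbb{P}^1$. In each case we get a \emph{fixed}-degree (depending only on $\mathfrak g$, hence only on bounded data) embedding of $\widetilde X$ and $\widetilde Y$ into a common $\mathbb{P}^N$ with the isomorphism realized by a projective-linear map, so its graph has degree $1$; pulling back through the normalizations $\widetilde X\to X$, $\widetilde Y\to Y$ (finite, of bounded degree by Lemma \ref{Lemma0}) yields a birational map $X\dashrightarrow Y$ whose graph has degree bounded in terms of $n$ and the ideals of $X,Y$, as in part 1 of Lemma \ref{LemmaDegreeGraph}.

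The main obstacle I expect is in part (2): controlling the degrees of the forms defining the pluricanonical embedding $X\dashrightarrow X'$ and, more importantly, of its \emph{inverse}, purely in terms of the ideal of $X\subset\mathbb{P}^n$. Computing $H^0(X,r_kK_X)$ and the associated map requires a resolution of singularities (available over $\mathbb{C}$), and turning that into an \emph{explicit} degree bound — rather than mere decidability — is exactly the point where the argument stalls, which is why the statement only claims an affirmative answer to Question \ref{Question1} (decidability) and not computability in the higher-dimensional general-type case. The reductions in part (1) also require some care: one must check that normalization and generic linear projection can be performed with degree bounds that are themselves \emph{explicit} in the input, but this follows from Lemma \ref{Lemma0} and Lemma \ref{LemmaDegreeGraph} without new ideas.
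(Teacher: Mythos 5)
Your proposal follows essentially the same route as the paper for all three parts: for (1a) a generic linear projection to $\mathbb{P}^{\dim X+1}$, for (1b) passage to normalizations, for (2) the pluricanonical embedding $|r_kK_X|$ plus the fact that a birational map between general-type varieties induces a linear automorphism of the pluricanonical image, and for (3) a fixed-degree (tri)canonical or anticanonical embedding of the normalized curve. The caveat you flag in part (2) — that for $k>2$ the constant $r_k$ is known to exist but has no explicit value, so one obtains existence of a bound rather than a computable bound — is a real subtlety that the paper itself acknowledges in the discussion preceding the proposition but does not explicitly reconcile with the wording of Question~\ref{Question1}; your reading that the proposition is asserting the weaker "affirmative answer" is the charitable and correct way to interpret the claim.
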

\begin{proof}

1)
a) Let $m=\dim (X)+1$, we can always find a linear projection $\pi :\mathbb{P}^n\dashrightarrow \mathbb{P}^m$, under which the strict transform $X'$ of $X$ is birational to $X$ and the strict transform $Y'$ of $Y$ is birational to $Y$. Assume that Question \ref{Question1} is affirmatively answered for hypersurfaces. Then, provided there is a birational map from $X$ to $Y$, there will be a birational map $g':X'\dashrightarrow Y'$ whose graph $\Gamma _{g'}\subset X'\times Y'\subset \mathbb{P}^m\times \mathbb{P}^m$ has explicitly bounded degree in terms of $X,Y$. Then the graph $\Gamma _g\subset X\times Y\subset \mathbb{P}^n\times \mathbb{P}^n$ of the lifting birational map $g:X\dashrightarrow Y$, which is contained in the intersection between $X\times Y$ and the strict transform of $\Gamma _{g'}$ under the dominant rational map $\pi \times \pi$, also has explicitly bounded degree. Then the proof of Theorem \ref{TheoremMain} provides us with a rational map $F:\mathbb{P}^n\dashrightarrow \mathbb{P}^n$, whose degree is explicitly bounded in terms of $X,Y$, and whose  restriction to $X$ is $g$.  

b) The proof is similar, now we use that any variety has a normalisation. 

2) By the results mentioned above, there is a number $r_k>0$ depending only on $k=\dim (X)=\dim (Y)$ so that the pluricanonical divisor $r_kK_X$ gives a birational map $X\dashrightarrow M(X)\subset \mathbb{P}^{N_1}$, and similarly $r_kK_Y$ gives a birational map $Y\dashrightarrow M(Y)\subset \mathbb{P}^{N_2}$. From the properties of the canonical divisor, we have that $X$ and $Y$ are birational iff $N_1=N_2=:N$ and $M(X)$ is isomorphic to $M(Y)$ via a linear automorphism of $\mathbb{P}^N$. Then, the degree of the graph of the composition $X\dashrightarrow M(X)\rightarrow M(Y)\dashrightarrow Y$ is bounded effectively in terms of $X$ and $Y$.  

3) We consider different cases. 

Case 1: $X$ and $Y$ are curves of genus $\geq 2$. We can apply the argument in 2). 

Case 2: $X$ and $Y$ are elliptic curves.  We use that on an elliptic curve the divisor $3p$, where $p$ is any point in the elliptic curve gives rise to an embedding of that elliptic curve as a cubic curve in $\mathbb{P}^2$. Now, two smooth cubic curves in $\mathbb{P}^2$ are isomorphic iff they are so under some linear automorphisms of $\mathbb{P}^2$. Then, the same argument as in 2) completes the proof. 

Case 3: $X$ and $Y$ are rational curves. In this case we can use the anti-canonical divisor and argue as in 2). 
\end{proof}

Proposition \ref{PropositionMain} gives some evidence that Question \ref{Question1} should have an affirmative answer. In the remaining of this section, we provide a rough argument, based on Iitaka's fibration, to reduce Question \ref{Question1} to the following two special cases: varieties of Kodaira dimension $-\infty$ or $0$. The argument we present here is largely heuristic and relies on further advancement on the understanding of Iitaka's fibrations. 

In the remaining of this section we work over $\mathbb{C}$. Iitaka has shown that for a smooth projective variety $Z$ of non-negative Kodaira dimension, for $m$ large enough so that $h^0(Z,mK_Z)\not= 0$, then the maps $Z\dashrightarrow \mathbb{P}H^0(Z,mK_Z)^* $ are all birational to one common fibration, now so-called the Iitaka's fibration. Moreover, the Iitaka's fibration has a universal property among fibrations whose general fibers have zero Kodaira's dimension, which is unique up to birational morphisms. Under some assumptions (in particular, including that the general fiber of the Iitaka's fibration has good minimal models), \cite{pacienza} showed that the bound on $m$ is effectively computed in terms of the dimension of $Z$, the pluricanonical divisors of general fibers, and the Betti numbers of some associated cover over the general fibers. More recently, \cite{birkar-zhang} (Theorem 1.2 therein) proved the same result unconditionally.

Now let $X$ and $Y$ be two smooth projective varieties of the same Kodaira dimension $\geq 0$. By the cited result, there is an effective constant $m$ (in terms of $X$ and $Y$), so that both natural maps $\varphi _X:X\dashrightarrow IF(X)\subset \mathbb{P}H^0(X,mK_X)^* $ and $\varphi _Y:Y\dashrightarrow IF(Y)\subset \mathbb{P}H^0(Y,mK_Y)^*$ are both birational to the corresponding Iitaka's fibrations of $X$ and $Y$. Note that $IF(X)$ and $IF(Y)$ are varieties of general type of the same dimension. By the uniqueness of Iitaka's fibration upto birational maps, and the properties of the pluricanonical divisors mentioned in the proof of part 2 of Proposition \ref{PropositionMain},  we deduce that $X$ and $Y$ are birational iff there is a birational map $\psi :X\dashrightarrow Y$ which takes a general fiber of $\varphi _X$ to a general fiber of $Y$, and that the induced map $IF(X)\dashrightarrow IF(Y)$ is the restriction of a linear automorphism of $\mathbb{P}H^0(X,mK_X)^*=\mathbb{P}H^0(Y,mK_Y)^*$. Note then that also the degrees of the graphs of the birational maps between the general fibers of $\varphi _X$ and $\varphi _Y$  are uniformly bounded in terms of the degrees of intersection between the graph of $\psi$ and the products of the general fibers of the concerned Iitaka's fibrations. 

We now assume that Question \ref{Question1} has an affirmative answer for varieties of Kodaira dimensions $0$ and $-\infty$. Then we will next argue heuristically that Question \ref{Question1} also has an affirmative answer in the general case. In fact, given $X$ and $Y$ two varieties of the same Kodaira dimension $\geq 0$. Let $m$ be the effective constant in \cite{birkar-zhang}, and $\varphi _X:X\dashrightarrow IF(X)$ and $\varphi :Y\dashrightarrow IF(Y)$ be the corresponding Iitaka's fibrations. If $\mathbb{P}H^0(X,mK_X)^*$ and $\mathbb{P}H^0(Y,mK_Y)^*$ are not isomorphic, then we conclude that $X$ and $Y$ are not birational, and nothing else needs to be done. So, we can assume that $\mathbb{P}H^0(X,mK_X)^*=\mathbb{P}H^0(Y,mK_Y)^*$. Then we can construct an algebraic variety parametrising all isomorphisms from $IF(X)$ to $IF(Y)$ which are restrictions of linear automorphisms of $\mathbb{P}H^0(X,mK_X)^*=\mathbb{P}H^0(Y,mK_Y)^*$. Now, using the assumption that Question \ref{Question1} has an affirmative answer for varieties of Kodaira dimension $0$, we construct for each general fiber of $\varphi _X$ an algebraic variety representing all birational maps onto the corresponding fiber of $\varphi _Y$ (under the isomorphism between $IF(X)$ and $IF(Y)$ mentioned in the previous sentence) whose graphs have degrees bounded effectively in terms of the specific fibers. Now, since the fibers vary algebraically, we expect that there will be a uniform bound on the degrees of the graphs mentioned in the last sentence. Then we expect that these seperate varieties, one for each general fiber of $\varphi _X$, can be put together to give a variety parametrising birational maps (of a special form, preserving the given Iitaka's fibrations) from $X$ to $Y$. Combining all the above construction, we see at the same time that Question \ref{Question1} has an affirmative answer.

 So, assuming that the heuristic argument in the above paragraph works (which will, as can be easily seen, require a deeper understanding of Iitaka's fibrations), for solving the birationality problem it remains to solve Question \ref{Question1} for varieties of Kodaira dimensions $0$ and $-\infty$. For varieties of Kodaira dimension $0$, we speculate that some generalisation of the argument for elliptic curves in the proof of part 3) of Proposition \ref{PropositionMain} may be useful. For varieties of Kodaira dimension $-\infty$, it may be useful to look at the pluri-anticanonical divisors $-mK_X$.     

 \section{Variants} In this section we prove similar results for other interesting classes of maps (such as dominant rational maps, regular morphisms, isomorphisms or regular embeddings). The proofs will combine the ideas in the previous sections together with some additional ingredients to resolve new difficulties associated with the particular case at hand.  As before, we work with an algebraically closed field $\mathbb{K}$ of arbitrary characteristic, unless specifically stated otherwise. We fix from now on two irreducible subvarieties $X,Y\subset \mathbb{P}^n$, and a positive integer $d$. 
 
We also show that some minor modifications prove the same results for maps on affine varieties. Then we deduce from this the validity of all the results for maps on arbitrary algebraic varieties.  
 
 \subsection{Dominant rational maps of bounded degrees} We consider the following question. 
 \begin{question}[Question (A)] Is there a rational map $F:\mathbb{P}^n\dashrightarrow \mathbb{P}^n$, of degree $\leq d$, so that $F|_X$ is a dominant rational map onto $Y$?
\end{question} 
 In the case where $X=\mathbb{P}^k$ a projective space, we have the unirationality problem, which has attracted a lot of interest. 
 \begin{theorem}
 The above Question (A) is computable. 
 \label{Theorem1}\end{theorem}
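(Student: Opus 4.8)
The plan is to reduce Question (A) to the birational case already handled in Theorem \ref{TheoremMain}, by factoring a dominant rational map through its Stein-type factorization into a dominant map with connected fibers followed by a finite map, or more simply by using the graph and a generic linear section. First I would observe that a dominant rational map $f:X\dashrightarrow Y$ of degree $\leq d$ has a graph $\Gamma_f\subset X\times Y\subset\mathbb{P}^n\times\mathbb{P}^n$ whose degree is effectively bounded in terms of $d$: exactly as in part 1 of Lemma \ref{LemmaDegreeGraph}, $\Gamma_F$ is a proper component of the intersection of the hypersurfaces $y_iF_j-y_jF_i=0$, and $\Gamma_f$ is a proper component of $\Gamma_F\cap(X\times Y)$, so Lemma \ref{Lemma0} together with Lemma 4.1 of \cite{truong} gives the bound. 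Crucially, $\dim\Gamma_f=\dim X\geq\dim Y$, and the second projection $\Gamma_f\to Y$ is dominant and generically finite (of some degree $e$, which need not be $1$).

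The key point is then to repeat the backward construction of the proof of Theorem \ref{TheoremMain} almost verbatim: we parametrize a variety $H_{0,0}\subset\mathbb{C}^n\times\mathbb{C}^n$ of effectively bounded degree which is birational to $X$ via the first projection, using the chain of monoids $M_{0,j}$ exactly as before (this only uses $\dim\Gamma_f=\dim X\leq n-1$, so the codimension is $\geq 3$ in $\mathbb{C}^n\times\mathbb{C}^n$ when $\dim X\leq n-1$; the edge case $X=\mathbb{P}^n$ forces $Y=\mathbb{P}^n$ and is trivial). The only modification is on the $Y$-side: instead of demanding that the second projection $H_{0,0}\to\mathbb{C}^n$ be birational onto $Y$, we demand only that it be \emph{dominant} onto $Y$. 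Dominance onto $Y$ means two things, both expressible by polynomial conditions on the parameters: first, that the image of $H_{0,0}$ under the second projection is \emph{contained} in $Y$ — this is checked, as in the proof of Theorem \ref{TheoremMain}, via effective Hilbert's Nullstellensatz applied to each generator of the ideal of $Y$ against a defining ideal of $H_{0,0}$ (after running the chain of monoids $M_{j,0}$ on the $Y$-side down to a suitable $H_{n,0}$); and second, that the image is \emph{dense} in $Y$, equivalently that $\dim(\text{image})=\dim Y$. The dimension condition is handled by the standard device: the image of the second projection of $H_{0,0}$ has dimension $\dim Y$ iff a generic fiber is finite iff, after adding generic linear forms $\ell_1,\ldots,\ell_{\dim Y}$ in the $y$-variables and a generic point constraint, the resulting system $\{H_{0,0},\,\ell_1-c_1,\ldots,\ell_{\dim Y}-c_{\dim Y}\}$ has a solution with the fiber over that point being nonempty and zero-dimensional — but more robustly, one checks that $H_{0,0}$ is \emph{not} contained in the preimage of any hyperplane section of $Y$ of the form $\{h=0\}\cap Y$ for $h$ a generic linear form, which is again an application of the Nullstellensatz-plus-new-variable trick (the set $\{h\neq 0\}\cap(\text{image})$ is nonempty for all $h$ in a generating set iff the system $\{H_{n,0},1-ah\}$ is consistent).

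Having produced a system whose solutions correspond to dominant (rather than birational) maps $X\dashrightarrow Y$ of effectively bounded graph degree, the final step is identical to Theorem \ref{TheoremMain}: append the equations, via effective Hilbert's Nullstellensatz, expressing that the resulting $H_{0,0}$ is contained in the graph $\Gamma_F$ of a rational self-map $F=[F_0:\cdots:F_n]$ of $\mathbb{P}^n$ with each $F_i$ of degree $\leq d$, together with the open condition $H_{0,0}\not\subset\{F_0=0\}$ that $X$ is not inside the indeterminacy locus of $F$. The union of all these systems is then consistent over $\mathbb{C}$ (or over any algebraically closed field) precisely when Question (A) has a positive answer, and Gröbner bases solve consistency effectively. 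I expect the main obstacle to be a clean effective treatment of the density/dimension condition on the image: unlike the birational case, where "birational onto $Y$" followed automatically once the dimensions matched and the map was generically injective, here one must genuinely certify that a \emph{dominant} — not merely a map-into — relation holds, and doing this with explicit degree bounds requires combining the generic-linear-section trick with the effective Nullstellensatz in a way that keeps all bounds explicit; the bookkeeping, though conceptually routine, is where the argument is most delicate.
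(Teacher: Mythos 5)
Your proposal follows a heavier route than the paper and, more importantly, has a genuine gap precisely at the point you yourself flag as delicate: certifying the \emph{density} of the image in $Y$.

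The paper's proof of Theorem \ref{Theorem1} does not re-run the monoid chain at all. It simply parametrizes $F$ directly by its coefficients, checks ``$X\not\subset$ indeterminacy locus of $F$'' and ``$F(X)\subset Y$'' via the Nullstellensatz trick already set up in the proof of Theorem \ref{TheoremMain}, and then handles dominance by a \emph{complement} argument: non-dominance of $F|_X$ means there exists a hypersurface $Z\subset\mathbb{P}^n$ of bounded degree with $\pi(\Gamma_f)\subset Z\cap Y$ but $Y\not\subset Z$; the degree bound on $Z$ comes from the degree bound on the image $W=\overline{\pi(\Gamma_f)}$ (Lemma \ref{LemmaDegreeGraph}) together with Mumford's result \cite{mumford} that $W$ is cut out set-theoretically by polynomials (cones over $W$) of degree $\deg(W)$. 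One then parametrizes pairs $(F,Z)$ by coefficients, writes a system $(E)$ for ``$Z\cap Y\supset\pi(\Gamma_f)$'' and $(E')$ for ``$(E)$ and $Z\supset Y$'', and identifies the non-dominant $F$ with the projection of the constructible set $(E)\setminus(E')$; the dominant $F$ are the complement of this inside the set of $F$ mapping $X$ into $Y$. This keeps everything effective because the degree of $Z$ is controlled.

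Your proposal's treatment of this step does not work as written. Checking that the image is not contained in $\{h=0\}$ for $h$ a \emph{linear} form only rules out the image lying in a hyperplane; a proper subvariety $W\subsetneq Y$ is in general a hypersurface section of higher degree, or even of higher codimension, and need not lie in any hyperplane that misses $Y$ (take $Y=\mathbb{P}^n$ itself and $W$ any nonlinear hypersurface). The phrase ``for all $h$ in a generating set'' is also misplaced: generators of $I(Y)$ all vanish on the image automatically since you have already imposed $\pi(\Gamma_f)\subset Y$. Your alternative suggestion -- slice by $\dim Y$ \emph{generic} linear forms with generic constants and ask for a nonempty zero-dimensional fiber -- is correct over an ideal setting, but you do not make it algorithmic: an algorithm cannot pick ``a generic'' constant, and you give no effective substitute. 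The missing ingredients are exactly Mumford's degree bound on set-theoretic generators of $W$ (so that non-dominance is witnessed by a \emph{bounded-degree} hypersurface $Z$ that can be parametrized) and the complement-of-constructible-sets formulation that turns ``density'' into a decidable condition. The monoid machinery you import from Theorem \ref{TheoremMain} is not needed here and would also force you to handle the case $\dim X>\dim Y$ (your remark that $\Gamma_f\to Y$ is ``generically finite'' silently assumes equidimensionality, which Question (A) does not require); the paper's direct parametrization of $F$ sidesteps this.
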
 
 \begin{proof}
We will give an algorithm, whose complexity is explicitly bounded, to solve this question. 

We write the maps $F:\mathbb{P}^n\dashrightarrow \mathbb{P}^n$ needed to find in terms of their coefficients.   

First, to check that $X$ is not contained in the indeterminacy locus $F$ and that $F$ maps $X$ to $Y$, we can proceed as in the proof of Theorem \ref{TheoremMain}. This way we get some systems of polynomial equations. 

We now proceed to check the condition that $F$ maps $X$ onto $Y$. To this end, we analyse what happens in the opposite case. So, assume for a moment that the map $F$ does not map $X$ onto $Y$. This means that there is a proper subvariety $W$ of $Y$ so that the projection of $\Gamma _f$ is $W$. We observe that, by part 2 of Lemma \ref{LemmaDegreeGraph}, the degree of $W$ is bounded in terms of that of $\Gamma _f$. Then since $\mathbb{P}^n\times \mathbb{P}^n$ can be embedded into $\mathbb{P}^{n^2+2n}$, we can use a trick of Mumford \cite{mumford} that {\bf set-theoretically} $W$ is generated by polynomials of degrees explicitly bounded in the degree of $W$ (in fact, these can be chosen as cones over $W$ with vertex a linear subspace of $\mathbb{P}^n$, and hence of degree $=$ $\deg (W)$), to find a proper hypersurface $Z$ of explicitly bounded degree of $\mathbb{P}^n$ so that $Y\not\subset Z$, $Z\cap Y$ contains $\pi (\Gamma _f)$. We now work on the affine Zariski open set $\mathbb{C}^n\times \mathbb{C}^n$ of $\mathbb{P}^n\times \mathbb{P}^n$, and interpret the previous sentence in terms of polynomial equations.  Since the degree of $Z$  is explicitly bounded, we can as before parameterise it in terms of an explicitly bounded number of coefficients. As in the proof of Theorem \ref{TheoremMain}, we can express the condition that $Z\cap Y$ contains $\pi (\Gamma _f)$ which we call (E). 

Now we let $(E')$ to be the union of $(E)$ and the polynomial equations expressing the condition that the hypersurfaces $Z$ in the above paragraph contain $Y$. Then the variety defined by $(E')$ is a subvariety of the variety defined by $(E)$. The maps $F$ which maps $X$ onto $Y$ will be parametrised by the complement of $(E')$ in $(E)$, and hence are parametrised by an algebraic variety. Hence to check that there are dominant rational maps $F$ from $X$ onto $Y$ is the same as checking that the set of solutions to $(E)$ is strictly bigger than the set of solutions to $(E')$, which is a decidable problem, by using for example Gr\"obner bases.   
 
\end{proof}
 
 \subsection{Regular morphisms of bounded degrees}  We now consider the following question. 
 \begin{question}[Question (B)] Is there a rational map $F:\mathbb{P}^n\dashrightarrow \mathbb{P}^n$, of degree $\leq d$, so that $F|_X$ is a regular morphism into $Y$?
 \end{question}
 
 Note that while the indeterminacy set of $F=[F_0:F_1:\ldots :F_n]$, as a rational map from $\mathbb{P}^n\dashrightarrow \mathbb{P}^n$, is simply the set $\mathcal{I}(F):=\{F_0=\ldots =F_n=0\}$, determining the indeterminacy set of the restriction of $F$ to a subvariety $X$ is not an easy task. In particular, it is not true that the indeterminacy set of $F|_X$ is always $\mathcal{I}(F)\cap X$. A classical example is the following. Let $X=\{xz=y^2\}\subset \mathbb{P}^2$. The rational map $F:\mathbb{P}^2\dashrightarrow \mathbb{P}^1$ given by $[x:y:z]\mapsto [x:y]$ has $\mathcal{I}(F)=[0:0:1]\in X$. However, $F|_X$ is a regular morphism. In fact, this follows from the fact that we have $[x:y]=[y:z]$ on $X$, and at least one among these two represents a genuine point in $\mathbb{P}^1$. One can also check that $[x:y]$ and $[y:z]$ are the only representatives (upto multiplicative factors) of $F|_X$, and the indeterminacy set of $[y:z]$ is $[1:0:0]\in X$. Thus for any $G:\mathbb{P}^2\dashrightarrow \mathbb{P}^1$ with $G|_X=F|_X$ we have that $\mathcal{I}(G)\cap X\not=\emptyset$.     
 
For a given rational map $F$, \cite{simis} computed the indeterminacy set of $F|_X$ in terms of some algebras associated to $F$. Hence by checking that these algebras give rise to empty indeterminacy sets, we arrive at a necessary and sufficient condition for $F|_X$ to be regular. However, as far as we know, no criterion has been given in the literature for the case where the map $F$ is not explicitly given. The main result we give here is that provided $X$ is a smooth projective variety,  then Question (B) is computable. 

\begin{example} Before stating the main result, let us explain the main idea via the example $X=\{xz=y^2\}\subset \mathbb{P}^2$ and $F[x:y:z]=[x:y]$ above. Let $\pi :\mathbb{P}^2\times \mathbb{P}^1\rightarrow \mathbb{P}^2$  be the projection to the first factor. We saw that for $f=F|_X$, the graph $\Gamma _f$ is not the intersection between $\Gamma _F$ and $\pi ^{-1}(X)$ (which is $\{([x:y:z],[u:v])\in \mathbb{P}^2\times \mathbb{P}^1:~xz=y^2,~xv=yu\}$), but it is an irreducible component of this intersection. Therefore, we can add in several polynomials to obtain the correct ideal for $\Gamma _f$. In this case, it turns out that we need only to add one more polynomial, of degree $2$. More precisely 
$$\Gamma _f=\{([x:y:z],[u:v])\in \mathbb{P}^2\times \mathbb{P}^1:~xz=y^2,~xv=yu,~yv=zu\}.$$
To check that $f$ is a regular morphism is the same as checking that the projection $\pi $ maps $\Gamma _f$ isomorphically to $X$. Since $\pi$ is a birational morphism on $\Gamma _f$, it suffices to check that the {\bf relative tangent spaces} (defined by taking with respect to variables in $\mathbb{P}^1$ of the defining equations for $\Gamma _f$) are $0$ at every point of $\Gamma _f$. (In fact, we can work on a chart, say $\mathbb{C}_{x,y}^2\times \mathbb{C}_u$. If $f$ is regular, then by Hilbert's Nullstellensatz in this chart $\Gamma _f$ is generated by $u-g(x,y)=0$ and the ideal for $X$, which when taking derivative with respect to $u$ will have rank $1$ as claimed. Then if $g_1,\ldots ,g_m$ are another set of generators for the ideal of $\Gamma _f$ in this chart, we will get the same answer, which is $1$. Hence the dimension of the relative tangent space, which is the corank of the above matrix, is $0$. Conversely, if the relative tangent spaces are $0$ for every point on $\Gamma _f$,   this will give us that the projection $\pi$ is \'etale. Hence, by using Zariski's Main Theorem, since $\pi$ is also projective and birational, it is an isomorphism. Alternatively, we observe that any fibre of $\pi$ must be finite, and hence by cohomological reason has the same number of points, which is $1$.) This can be checked by working with the $(2+1)(1+1)=6$ coordinate charts $\mathbb{C}^2\times \mathbb{C}$ of $\mathbb{P}^2\times \mathbb{P}^1$. For example, in one such chart, corresponding with $z=1$ and $v=1$, we obtain
\begin{eqnarray*}
X&=&\{(x,y)\in \mathbb{C}^2:~x=y^2\},\\
\Gamma _f&=&\{((x,y),u)\in \mathbb{C}^2\times \mathbb{C}:~x=y^2,~x=yu,~y=u\}.
\end{eqnarray*}
The ideal of $\Gamma _f$ is generated by $x-y^2,x-yu,y-u$, and hence the dimension of the relative tangent space  is the corank of the matrix we obtain by taking the first derivatives of the above polynomials with respect to $u$: which consecutively are $0,-y,-1$. Since the rank of this matrix is $1$ everywhere on $\Gamma _f$, we conclude that the relative tangent space has dimension $0$ everywhere, as desired.
\label{Example1}\end{example}

\begin{remark}The point of view in Example \ref{Example1} is that to check whether $f$ is a regular morphism, we do not need to compute the indeterminacy set of $f$. In stead, we only need to check that $\Gamma _f$ is isomorphic to $X$, and this in turn is the same as checking that the projection $\Gamma _f\rightarrow X$ is \'etale. The special form of the projection makes computations less complicated and is more flexible, allowing us to deal with maps $F$ not explicitly given. 
\end{remark}

In Example \ref{Example1}, we see that the important factor for being able to arrive at an explicit algorithm is that the number of polynomials needed, as well as their degrees, to add into $\Gamma _F\cap \pi ^{-1}(X)$ - where $\pi :\mathbb{P}^n\times \mathbb{P}^n\rightarrow \mathbb{P}^n$ is the projection to the first factor - to obtain the ideal for $\Gamma _f$ (recall that $f=F|_X$), should be explicitly bounded in terms of $d$ and $X$. (Note that we will need to deal with the case where $f$ is undetermined, and so we do not know anything about $\Gamma _f$ except that it is contained in $\Gamma _F\cap \pi ^{-1}(X)$, its degree is explicitly bounded and it should be isomorphic to $X$. Hence if $X$ is smooth then $\Gamma _f$ is also smooth.) The following lemma (see Theorem 10 in \cite{blanco-jeronimo-solerno}) is important in this aspect. (For the convenience of the readers, we give a simple proof, based solely on \cite{mumford}, of a non-optimal version of the lemma, which is enough for the conclusions of Theorems \ref{Theorem2} and \ref{Theorem3}.) 
\begin{lemma}
Let $V\subset \mathbb{A}^n$ be a smooth equi-dimensional algebraic variety and set $m:=(n-\dim (V))(1+\dim (V))$. There exist polynomials $f_1,\ldots ,f_m$ in $\mathbb{K}[x_1,\ldots ,x_n]$ with degrees bounded by $\deg (V)$ such that $I(V)=(f_1,\ldots ,f_m)$.  
\label{Lemma2}\end{lemma}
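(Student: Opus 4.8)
The plan is to combine three ingredients: Mumford's cone trick to manufacture elements of $I(V)$ of degree $\leq\deg(V)$, the smoothness of $V$ to turn set-theoretic information into ideal-theoretic information, and a crude dimension count to bound the number of generators. Throughout I write $d=\dim(V)$, $c=n-\dim(V)$ and $D=\deg(V)$; since $V$ is smooth, its conormal sheaf $N^{*}_{V}$ is locally free of rank $c$.

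\emph{Step 1: bounded-degree elements of $I(V)$.} Let $\bar V\subset\mathbb{P}^{n}$ be the projective closure of $V$, of degree $D$. For a generic linear subspace $\Lambda\subset\mathbb{P}^{n}$ of dimension $c-2$ disjoint from $\bar V$, the cone $C_{\Lambda}(\bar V)$ is a hypersurface of degree $D$ through $\bar V$; dehomogenising yields a polynomial of degree $\leq D$ lying in $I(V)$. By Mumford's argument (the very statement invoked in the proof of Theorem~\ref{Theorem1}), finitely many such cones cut out $\bar V$, hence $V$, set-theoretically. I would then check the key geometric point: at a point $p\in V$ where $C_{\Lambda}(\bar V)$ is smooth, its conormal line inside $N^{*}_{V,p}\cong\mathbb{K}^{c}$ is the annihilator of the span of $T_{p}V$ together with the directions from $p$ to $\Lambda$, and as $\Lambda$ varies this line sweeps out all of $\mathbb{P}(N^{*}_{V,p})$. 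Hence the finite-dimensional space $W$ of polynomials in $I(V)$ of degree $\leq D$ has common zero set equal to $V$, and the evaluation map $W\otimes\mathcal{O}_{V}\to N^{*}_{V}$ is surjective.

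\emph{Step 2: reduction via smoothness.} Next I would record the elementary fact that if $f_{1},\dots,f_{m}\in W$ have common zero set $V$ and their Jacobian matrix $(\partial f_{i}/\partial x_{j})$ has rank exactly $c$ at every point of $V$, then $(f_{1},\dots,f_{m})=I(V)$. Indeed the subscheme $Z=\{f_{1}=\dots=f_{m}=0\}$ is then supported on $V$ (Nullstellensatz), is equidimensional of dimension $d$ there (as $V$ is), and has Zariski cotangent space of dimension $n-c=d$ at each closed point of $V$; so each local ring $\mathcal{O}_{Z,p}$ is regular, $Z$ is reduced, and $Z=V$ as schemes, i.e. $(f_{1},\dots,f_{m})=I(V)$. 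This is where smoothness does the real work, since in general $I(V)$ is not generated by a set-theoretic defining system.

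\emph{Step 3: counting, and the main obstacle.} It remains to extract from $W$ some $m=c(d+1)=(n-\dim V)(1+\dim V)$ polynomials whose Jacobian has rank $c$ everywhere on $V$ and whose common zero set is $V$. Here I would stratify the bad locus: since $W\otimes\mathcal{O}_{V}\to N^{*}_{V}$ is surjective, a generic $c$-tuple $f_{1},\dots,f_{c}\in W$ has Jacobian of rank $c$ off a proper closed subset $Z_{1}\subset V$ with $\dim Z_{1}\leq d-1$; a further generic batch of $c$ elements of $W$ lowers the dimension of this bad locus by another unit, and after $d+1$ such batches it becomes empty, giving $c(d+1)$ polynomials of degree $\leq D$ with full-rank Jacobian on all of $V$. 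Since $c(d+1)\geq c+1$, the first batches may simultaneously be taken among cones cutting out $V$ set-theoretically (the case $d=0$, a finite set of points, being direct), and Step 2 then finishes the proof. The delicate points --- and the real obstacle --- are the two genericity assertions: that the cone conormals fill $\mathbb{P}(N^{*}_{V,p})$ at \emph{every} (not merely the generic) point $p\in V$, and that a generic batch of $c$ sections of the globally generated bundle $N^{*}_{V}$ genuinely drops the dimension of the degeneracy locus. The latter is a Bertini-type statement, and one is forced to argue batch by batch --- which is precisely what produces the non-optimal factor $d+1$ --- rather than by a single sharp general-sections count, because $\mathbb{K}$ may have positive characteristic and generic smoothness of hyperplane sections is unavailable. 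Degree control is automatic throughout: every $f_{i}$ is a $\mathbb{K}$-linear combination of cone equations of degree $D=\deg(V)$.
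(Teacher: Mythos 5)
Your proposal aims at the sharp count $m=(n-\dim V)(1+\dim V)$ stated in the lemma, which is the theorem of Blanco--Jer\'onimo--Solern\'o that the paper \emph{cites} but does not reprove: the paper explicitly proves only a weaker version in which $m$ is bounded in terms of $n$ and $\deg(V)$ (sufficient for its applications). Both proofs rest on the same two pillars --- Mumford's cone construction to produce degree-$\leq\deg(V)$ elements of $I(V)$, and the smoothness criterion that a set-theoretic defining system whose Jacobian has rank $c=n-\dim V$ at every point of $V$ generates the full ideal $I(V)$; this is your Step~2 and the paper's opening observation, except that the paper's phrase ``$df_1,\ldots,df_{n-r}$ generate $T_{x_0}V$'' should read that they generate the \emph{conormal} space, as you correctly formulate via $N^*_V$. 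The divergence is in how the two arguments count. You run a generic-batch induction: each generic $c$-tuple from the globally generating space $W$ drops the Jacobian degeneracy locus by one dimension, so $d+1$ batches of $c$ polynomials suffice, yielding the sharp $m=c(d+1)$. The paper instead runs a point-by-point stratification: pick $c$ cone equations generating the conormal space at one chosen point $x_0$, form the degeneracy locus $V_1\subsetneq V$, then add fresh conormal-generators at a generic point of \emph{each} irreducible component of $V_1$, and iterate; the number of generators is therefore controlled only through Andreotti--Bezout (Lemma~\ref{Lemma0}) bounds on the number of components of the successive bad loci, and hence depends on $\deg(V)$ as well as $n$. What the paper's route buys is freedom from the Bertini-type genericity lemma you correctly flag as the delicate point of Step~3; what it loses is the sharp constant. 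Your caution about positive characteristic there is somewhat overstated --- the needed input is a degeneracy-locus Bertini for sections of a globally generated bundle, which is characteristic-free via the usual incidence-variety dimension count, unlike generic smoothness of hyperplane sections --- but it is still a genuine extra lemma, and the paper deliberately structures its argument so as not to need it.
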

\begin{proof} Here is a simple proof of the following weaker version of the lemma which is enough for applications in the remaining of this paper. 

Claim. There is a number $m$ depending only on $n$ and $\deg (V)$, so that there exist $m$ polynomials $f_1,\ldots ,f_m$ of degree $\leq \deg (V)$ for which $I(V)=(f_1,\ldots ,f_m)$. 

Proof of the claim. Since the variety $V$ is smooth, if $f_1,\ldots ,f_m$ are polynomials vanishing on $V$, so that $V$ is set-theoretically $\{f_1=\ldots =f_m=0\}$ and for which the derivatives $df_1,\ldots ,df_m$ generate $T_xV$ for all $x\in V$, then $I(V)=(f_1,\ldots ,f_m)$ (see \cite{mumford}).   

Let $r=\dim (V)\leq n$. Using the above observation, we choose a point $x_0\in V$ and $n-r$ polynomials $f_1,\ldots ,f_{n-r}$ of degrees $\leq \deg (V)$ vanishing on $V$ so that $df_1,\ldots ,df_{n-r}$ generate $T_{x_0}V$ (exist by \cite{mumford}). Then the subset $V_1 $ of $V$ where $df_1,\ldots ,df_{n-r}$ do not generate the tangent space is a strict subvariety defined by the vanishing of all the $(n-r)\times (n-r)$ minors of the matrix $df_1,\ldots ,df_{n-r}$. Hence by Lemma \ref{Lemma0}, the number of irreducible components of $V_1$, as well as their degrees, are explicitly bounded. Moreover, $\dim (V_1)\leq \dim (V)-1$. We then choose, for each irreducible component of $V_1$, a generic point. For each of these points, we add in $\leq n-r$ polynomials of degrees $\leq \deg (V)$ so that to generate the tangent spaces at these points. Then the subset $V_2$ of $V$ where all the polynomials we already constructed do not generate tangent spaces is again a subvariety for which we can bound the number of irreducible components and their degrees. Moreover, $\dim (V_2)\leq \dim (V)-2$. We can use the same procedure as before, and after doing this at most $r$ times, the polynomials we construct will generate tangent spaces at every point of $V$. Note then that the variety which is the reduced structure of the ideal $\{f_1,\ldots ,f_q\}$ at the end is a disjoint union of $V$ and some other irreducible components. We can add in some more polynomials $g_1,\ldots , g_{p}$ of degree $\leq \deg(V)$ (by avoiding generic points of the irreducible components besides $V$ and proceed by induction as before) so that the zero set of $\{f_1=\ldots =f_q=g_1=\ldots =g_p=0\}$ is exactly $V$. Hence, they generate the ideal of $V$ as desired. 
\end{proof}

Now we can state the main result of this subsection.  
\begin{theorem}
Assume that $X$ is a smooth projective variety. Then Question (B) is computable. 
\label{Theorem2}\end{theorem}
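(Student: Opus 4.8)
The plan is to re-use the machinery of the proof of Theorem~\ref{TheoremMain} --- parametrise everything by undetermined coefficients, turn each geometric condition into a polynomial system via effective Hilbert's Nullstellensatz (\cite{hermann,brownawell,kollar}) together with coefficient balancing, and decide solvability by Gr\"obner bases --- but to organise the argument around the graph $\Gamma_f$ of $f=F|_X$, exploiting, as in Example~\ref{Example1}, that regularity of $f$ can be read off from the first projection $\pi :\Gamma_f\to X$ without ever computing the indeterminacy locus of $f$. Write $F=[F_0:\ldots:F_n]$ with the $F_i$ homogeneous of degree $\le d$ with undetermined coefficients, and (after a generic linear change of coordinates in the target $\mathbb{P}^n$, splitting into finitely many systems if necessary) assume $F_0|_X\not\equiv 0$, so that $f=F|_X$ is a genuine rational map and $\Gamma_f$ is the unique irreducible component of $\{(x,y):\ y_kF_l(x)-y_lF_k(x)=0 \text{ for all } k,l\}\cap\pi^{-1}(X)$ dominating $X$. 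By part~1 of Lemma~\ref{LemmaDegreeGraph} the degree of $\Gamma_f$ is explicitly bounded in terms of $d$; and \emph{when $f$ is a morphism} the projection makes $\Gamma_f\cong X$, so $\Gamma_f$ is smooth and equi-dimensional, whence Lemma~\ref{Lemma2} (applied on each standard affine chart $\mathbb{K}^n\times\mathbb{K}^n$ of $\mathbb{P}^n\times\mathbb{P}^n$, exactly as all six charts are used in Example~\ref{Example1}) gives an explicitly bounded number $m$ of polynomials $h_1,\ldots,h_m$ of explicitly bounded degree generating the ideal of $\Gamma_f$. We therefore introduce $h_1,\ldots,h_m$ with undetermined coefficients and set $Z:=\{h_1=\ldots=h_m=0\}$.

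I would then impose the following conditions, each of which is an emptiness or containment statement for zero sets and hence, by effective Hilbert's Nullstellensatz and balancing the coefficients of cofactors of explicitly bounded degree, becomes a system of polynomial equations in the coefficients of the $F_i$, of the $h_j$, and auxiliary variables: (i) every generator of $I(X)$ (in the $x$-variables) lies in the radical of $\langle h_1,\ldots,h_m\rangle$, i.e.\ $Z\subseteq\pi^{-1}(X)$; (ii) each $h_j$ vanishes on the locus $\{x\in X:\ F_0(x)\neq 0,\ y_iF_0(x)=F_i(x)\ \forall i\}$ --- equivalently, $h_j$ lies in the radical of $\langle I(X),\ y_iF_0-F_i\ (i=1,\ldots,n),\ 1-aF_0\rangle$ in $\mathbb{K}[x,y,a]$ --- so that $Z$ contains the closure of that locus, which is precisely $\Gamma_f$; (iii) on each affine chart, the $m\times n$ Jacobian matrix $(\partial h_j/\partial y_i)$ of the dehomogenised $h_j$ with respect to the second set of coordinates has rank $n$ at every point of $Z$, i.e.\ $1$ lies in the ideal generated by $h_1,\ldots,h_m$ together with all the $n\times n$ minors of that matrix; and (iv) every generator of $I(Y)$ (in the $y$-variables) lies in the radical of $\langle h_1,\ldots,h_m\rangle$, i.e.\ the image $\pi_2(Z)$ of $Z$ under the second projection $\pi_2$ is contained in $Y$. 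Collecting these equations over all charts and over the finitely many choices forced by the normalisation $F_0|_X\not\equiv 0$ produces finitely many explicit polynomial systems. If $F|_X$ is a regular morphism into $Y$, then the $h_j$ furnished by Lemma~\ref{Lemma2} for $\Gamma_f\cong X$ satisfy (i)--(iv), so one of the systems has a solution. Conversely, if (i)--(iv) hold, then (ii) gives $\Gamma_f\subseteq Z$, and (iii) forces $\pi|_Z:Z\to X$ to be unramified, hence to have finite fibres, hence the restriction $\pi|_{\Gamma_f}:\Gamma_f\to X$ --- a projective birational morphism onto the smooth, hence normal, variety $X$ --- has finite fibres and so is an isomorphism by Zariski's Main Theorem; therefore $f=F|_X$ is a morphism, and (ii) and (iv) give that its image $\pi_2(\Gamma_f)\subseteq\pi_2(Z)$ lies in $Y$. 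It remains to decide solvability of each system by Gr\"obner bases; the bound on $\deg\Gamma_f$ (polynomial in $d$, from Lemmas~\ref{Lemma0} and~\ref{LemmaDegreeGraph}), the bounds on $m$ and on the degrees of the $h_j$ from Lemma~\ref{Lemma2}, and the exponential but explicit degree bounds in effective Nullstellensatz make the number of variables, the number of equations, their degrees, and the resulting Gr\"obner-basis complexity all explicit --- which is what ``computable'' asks for.

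The step I expect to be the main obstacle --- and the reason smoothness of $X$ is indispensable --- is the parametrisation of the \emph{unknown} graph $\Gamma_f$: since $F$ is undetermined, all one knows about $\Gamma_f$ is its degree bound and the fact that it is smooth once $f$ is a morphism, and it is precisely Lemma~\ref{Lemma2} (via \cite{mumford}, or \cite{blanco-jeronimo-solerno}) that converts smoothness into an explicit bound on the number and degrees of generators of its ideal, without which one cannot even write down finitely many unknown polynomials cutting it out. The second delicate point, already emphasised in Example~\ref{Example1}, is that one must not attempt to compute the indeterminacy locus of $F|_X$ (which can be strictly smaller than $\mathcal{I}(F)\cap X$); the device of testing regularity through finiteness of the fibres of $\pi|_{\Gamma_f}$, i.e.\ through the relative Jacobian rank condition (iii) on $Z$, is what keeps the whole construction polynomial and compatible with undetermined coefficients, and it is here that Zariski's Main Theorem and the normality of $X$ enter. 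Everything else reduces to the now-familiar translation into polynomial systems and Gr\"obner-basis elimination carried out in the proof of Theorem~\ref{TheoremMain}.
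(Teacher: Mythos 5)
Your proposal follows the paper's proof of Theorem \ref{Theorem2} closely: it parametrises the unknown ideal of $\Gamma_f$ by an explicitly bounded number of unknown generators of explicitly bounded degree via Lemma \ref{Lemma2} (exactly where the smoothness of $X$, hence of $\Gamma_f\cong X$, is used), it tests regularity through the vanishing of the relative Jacobian of the first projection $\pi\colon\Gamma_f\to X$ followed by Zariski's Main Theorem (the viewpoint of Example \ref{Example1}), and it converts each containment and emptiness statement into polynomial systems via effective Hilbert's Nullstellensatz, to be decided by Gr\"obner bases. The only presentational difference is that you anchor the condition ``$\Gamma_f\subseteq Z$'' directly to the Rabinowitsch ideal $\langle I(X),\ y_iF_0-F_i,\ 1-aF_0\rangle$, whereas the paper anchors it to the dense open subset $H_{0,0}\subset\Gamma_f$ already produced in the proof of Theorem \ref{TheoremMain} and requires $Z$ to lie inside $\Gamma_F\cap\pi^{-1}(X)$ by construction; these are equivalent encodings.

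There is, however, a genuine gap in the converse direction. You write ``assume $F_0|_X\not\equiv 0$'' but never impose this as a constraint on the undetermined coefficients of $F$; a ``generic linear change of coordinates'' cannot be performed when $F$ is itself unknown, and is useless anyway when $X\subseteq\mathcal{I}(F)$. In that degenerate case the locus $V(I(X),\ y_iF_0-F_i,\ 1-aF_0)$ is empty, its ideal is the unit ideal, and condition (ii) holds for \emph{every} choice of $h_j$; taking, say, $h_1=1$ then makes $Z=\emptyset$ and renders (i), (iii), (iv) vacuous, so your systems always admit a trivial solution and the algorithm would always answer ``yes.'' The paper's proof opens precisely by excluding this: ``checking that a rational map $F$ gives rise to a rational map from $X$ into $Y$ can be expressed in terms of polynomial equations,'' meaning one must add a solvability condition of the type $\{I(X),\ 1-tF_0\}$ has a solution (as in Step 8 of the algorithm after Theorem \ref{TheoremMain}). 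Once you add that constraint on the coefficients of $F$, the rest of your argument is correct and coincides with the paper's.
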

 \begin{proof}
 Again, checking that a rational map $F:\mathbb{P}^n\dashrightarrow \mathbb{P}^n$ gives rise to a rational map from $X$ into $Y$ can be expressed in terms of polynomial equations. (This does not require any condition on $X$.) 
 
 We now proceed to showing that when $X$ is a smooth projective variety, the conditions for that $f=F|_X$ is regular, as a map from $X$ into $\mathbb{P}^n$, can also be expressed in terms of polynomial equations.  
 
As seen from Example \ref{Example1}, that $f$ is regular is equivalent to $\Gamma _f$ is isomorphic to $X$ and hence in particular is smooth. By Lemma \ref{Lemma2}, we can add an explicitly bounded number of polynomials of explicitly bounded degrees and which vanish on $H_{0,0}$ in the proof of Theorem \ref{TheoremMain} (this conditon can be expressed in terms of polynomial equations by the argument in the proof of Theorem \ref{TheoremMain}) into the polynomials already generating $\Gamma _F\cap \pi ^{-1}(X)$, where $\pi :\mathbb{P}^n\times \mathbb{P}^n\rightarrow \mathbb{P}^n$ is the projection to the first factor, to obtain generators $h_1,\ldots ,h_p$ for some $\Gamma$ (which we want to be the graph $\Gamma _f$). Note that the number $p$ and the degrees of $h_1,\ldots ,h_p$ are explicitly bounded.   

We want that $\Gamma$ is exactly $\Gamma _f$ and moreover it is isomorphic to $X$. Note that these two conditions are equivalent to that there is one such constructed $\Gamma$ which is  isomorphic to $X$. In fact, if this is the case, then $\Gamma $ is in particular irreducible. Since $\Gamma$ contains $H_{0,0}$ as a {\bf set}, it follows from the fact that $\Gamma$ is isomorphic to $X$, that $\Gamma$ is exactly the graph $\Gamma _f$. 

Now to check that $\Gamma$ is isomorphic to $X$, we need to check that the relative tangent spaces at every point in $\Gamma$ has dimension $0$.  (Strictly speaking, this check only shows that the irreducible component of $\Gamma$ containing $\Gamma _f$ as a  set is isomorphic to $X$, but this is enough to have that $\Gamma _f$ is isomorphic to $X$ as desired. A priori, since we are not given explicitly the map $F$ and the polynomials we add in are also undetermined, there may be other components of $\Gamma$ besides $\Gamma _f$.) We can work with Zariski open sets $\mathbb{C}^n\times \mathbb{C}^n$ of $\mathbb{P}^n\times \mathbb{P}^n$. From the above generators $h_1,\ldots ,h_p$ of $\Gamma$, which are undetermined, hence need to be parametrised in terms of their coefficients, we compute the derivatives of them with respect to the variables $y_1,\ldots ,y_n$ in the second factor of $\mathbb{C}^n\times \mathbb{C}^n$. Call the Jacobian matrix obtained $J_{\Gamma /X}$, and call $M_1,\ldots ,M_q$ all the $n\times n$-minors of $J_{\Gamma /X}$. Note that the number $q$ and the degrees of $M_1,\ldots ,M_q$ are also explicitly bounded. Then the fact that the relative tangent spaces at every point  of $\Gamma$ has dimension $0$ is translated into that the system defined by $M_1=\ldots =M_q=0$ has no solution on $\Gamma$. As in the proof of Theorem \ref{TheoremMain}, this and the effective Hilbert's Nullstellensatz allow us to translate Question B into the question about the existence of solutions to an explicitly constructed variety.  

 \end{proof}   
  
 \subsection{Isomorphisms/regular embeddings of bounded degrees} We now consider the following question. Question (C): Is there a rational map $F:\mathbb{P}^n\dashrightarrow \mathbb{P}^n$, of degree $\leq d$, so that $F|_X$ is an isomorphism onto $Y$? We also consider a more general question. Question (C'): Is there a rational map $F:\mathbb{P}^n\dashrightarrow \mathbb{P}^n$, of degree $\leq d$, so that $F|_X$ is a regular embedding into $Y$? 

Combining the previous results, we obtain the following result. 
\begin{theorem}
Assume that $X$ is a smooth projective variety. Then Questions (C) and (C') are computable.
\label{Theorem3}\end{theorem}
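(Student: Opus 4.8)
The plan is to reduce Questions (C) and (C') to Question (B) -- which is computable by Theorem \ref{Theorem2} -- together with two additional conditions that can again be encoded by explicitly bounded systems of polynomial equations. Indeed, a rational map $F:\mathbb{P}^n\dashrightarrow\mathbb{P}^n$ of degree $\le d$ has $f:=F|_X$ a \emph{regular embedding} into $Y$ precisely when (a) $f$ is a regular morphism into $Y$, and (b) $f$ is an isomorphism onto its image $f(X)$; and $f$ is an \emph{isomorphism} onto $Y$ precisely when, in addition, (c) $f(X)=Y$. Condition (a) is handled verbatim by the proof of Theorem \ref{Theorem2}: that argument produces explicitly bounded generators $h_1,\dots,h_p$ (with undetermined coefficients, subject to explicitly bounded systems of equations) of a candidate graph $\Gamma$ whose relevant component is the graph $\Gamma_f$, together with polynomial equations forcing the first projection $\pi_1:\Gamma\to X$ to be an isomorphism. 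Since $X$, hence $\Gamma_f$, is smooth and projective, condition (b) is equivalent -- by the Zariski's Main Theorem argument already used in Example \ref{Example1}, namely that a proper, unramified morphism which is injective on closed points is a closed immersion onto its image -- to the two conditions that the second projection $\pi_2:\Gamma_f\to\mathbb{P}^n$ be (b1) unramified and (b2) injective on closed points.

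For (b1) I would argue exactly as in the proof of Theorem \ref{Theorem2}, but differentiating the generators $h_i$ with respect to the variables $x_1,\dots,x_n$ of the first factor instead of the second: one forms the relative Jacobian $(\partial h_i/\partial x_j)$ and all its $n\times n$ minors $M'_1,\dots,M'_{q'}$, and the relative tangent spaces of $\pi_2$ vanish at every point of $\Gamma_f$ exactly when $M'_1=\cdots=M'_{q'}=0$ has no solution on $\Gamma_f$; the effective Hilbert's Nullstellensatz turns this into an explicitly bounded system of polynomial equations in the undetermined coefficients. For (b2) one works on $(\mathbb{C}^n\times\mathbb{C}^n)\times(\mathbb{C}^n\times\mathbb{C}^n)$ with coordinates $(x,y,x',y')$: the product $\Gamma_f\times\Gamma_f$ is cut out by $h_i(x,y)$ together with $h_i(x',y')$, and $f$ is injective exactly when this variety, intersected with $\{y_1-y'_1=\cdots=y_n-y'_n=0\}$, is contained in each hypersurface $\{x_j-x'_j=0\}$, $j=1,\dots,n$. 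Each of these $n$ containments is an instance of the ``$V(\cdot)\subseteq V(\cdot)$'' test used repeatedly in the proof of Theorem \ref{TheoremMain}, and hence is again encoded, via the effective Nullstellensatz, by explicitly bounded polynomial equations in the coefficients. Assembling all of the above into a single explicitly constructed variety $W$, one gets $W\neq\emptyset$ if and only if such an $F$ exists, and solvability is decided by Gr\"obner bases; this settles Question (C').

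For Question (C) it remains to impose (c). But $\pi_2$ is injective on $\Gamma_f$, so $\dim f(X)=\dim\Gamma_f=\dim X$; hence if $\dim X\ne\dim Y$ the answer is trivially No, while if $\dim X=\dim Y$ then $f(X)$ is an irreducible closed subvariety of $Y$ of the same dimension, so $f(X)=Y$ automatically. Thus Question (C) reduces to Question (C') together with the a priori test $\dim X=\dim Y$; alternatively one may simply append the surjectivity equations constructed in the proof of Theorem \ref{Theorem1}. The genuinely new ingredient, and the place I expect the heaviest bookkeeping, is the injectivity condition (b2): unlike (a) and (b1) it is not a pointwise condition on $\Gamma_f$ but a condition on pairs of points, so one must pass to $\Gamma_f\times\Gamma_f$ and still keep the number and the degrees of the resulting Nullstellensatz equations explicitly bounded (and, as elsewhere in the paper, take a little care passing between $\mathbb{P}^n$ and its affine charts, which the same linear change of coordinates as before resolves).
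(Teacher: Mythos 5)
Your proposal is correct, but it reaches the conclusion by a genuinely different route than the paper. The paper's proof layers three already-established checks: it first invokes Theorem \ref{TheoremMain} to produce the (undetermined-coefficient) graph $\Gamma_f$ of a map that is \emph{birational onto its image} $Z$, then applies Theorem \ref{Theorem2} to force $\pi_1:\Gamma_f\to X$ to be an isomorphism, and finally imposes that the relative Jacobian $J_{\Gamma/Z}$ (differentiating the generators of $\Gamma_f$ with respect to the $Y$-side variables) have full rank everywhere, i.e.\ that $\pi_2$ be unramified. Since $\pi_2$ is proper, birational onto $Z$, and unramified, Zariski's Main Theorem gives that $\pi_2$ is an isomorphism onto $Z$ -- injectivity never needs to be checked separately because it comes for free from the birationality supplied by the monoid machinery. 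You instead drop the birationality input altogether, replacing it by a direct injectivity test (b2) encoded on $\Gamma_f\times\Gamma_f$ via the diagonal $\{y=y'\}\subseteq\{x=x'\}$ containment, and then appeal to ``proper $+$ unramified $+$ injective on closed points $\Rightarrow$ closed immersion.'' Both arguments are valid (and your criterion does work over an arbitrary algebraically closed field, since unramifiedness forces the residue-field extensions to be trivial once the map is set-injective). What the paper's route buys is economy: it reuses the $Y$-side monoid projections already constructed in Theorem \ref{TheoremMain} and adds only one new bank of minor equations. What your route buys is independence from the birationality machinery on the $Y$-side -- you only need Question (B)'s apparatus plus two Jacobian/Nullstellensatz layers -- at the cost of doubling the ambient variables for the injectivity check, which you correctly identify as the heaviest new bookkeeping. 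Your reduction of (C) to (C') via the dimension count $\dim f(X)=\dim X$ is also fine and a bit cleaner than the paper's terse ``we only need to prove (C').''
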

\begin{proof}
We only need to prove for Question (C').

Using Theorem \ref{TheoremMain}, we can check whether there is a rational map $F:\mathbb{P}^n\dashrightarrow \mathbb{P}^n$ of degree $\leq d$ so that $f=F|_X$ is a birational map into $Y$. Call $Z$ the image of $f$. (Here, we do not need to specify $Z$.) 

We can use Theorem \ref{Theorem2} to check if $F|_X$ is a regular morphism.  

It remains to check whether the birational map $g^{-1}:Z\dashrightarrow X$,  where $g=f^{-1}$, is also a regular morphism. To this end, we can compute the derivatives of the generators we have for $\Gamma _f$ in the proof of Theorem \ref{Theorem2} with respect to variables in the copy of $\mathbb{P}^n$ containing $Y$, and denote the resulting matrix by $J_{\Gamma /Z}$. As in the proof of Theorem \ref{Theorem2}, we only need to check that $J_{\Gamma /Z}$ gives rise to that the relative tangent spaces with respect to the projection $\mathbb{P}^n\times \mathbb{P}^n\rightarrow \mathbb{P}^n$ to the second factor are $0$. As before, this can be described in terms of polynomial equations involving minors of $J_{\Gamma /Z}$. 

(Note that here a priori, we do not know whether $Z$ is smooth or not. To this end, we can add the equations for the condition that $Z$ is smooth: it is the same that at least one of the $(n-\dim (X))\times (n-\dim (X))$ minors of the derivatives of the generators of $I(Z)$ must be non-zero at every point of $Z$, which then can be interpreted by effective Hilbert's Nullstellensatz.) 

At the end, we obtain some explicit systems of polynomial equations, whose solutions parametrise Question (C). 
\end{proof}

\subsection{Maps on affine varieties}\label{SectionAffine} In this subsection we discuss the above results for the cases where varieties concerned are affine. More precisely, considering now $X,Y\subset \mathbb{C}^n$ be closed irreducible varieties, and a positive integer $d$. We ask whether there is a rational map $F:\mathbb{C}^n\dashrightarrow \mathbb{C}^n$ of degree $\leq d$ so that the restriction $f=F|_X$ maps $X$ into $Y$ and : i) birational; or ii) dominant rational; or iii) regular; or iv) regular embedding. We also ask the following stronger versions of iii) and iv), which are not necessary in the projective setting: whether $f$ is iii') regular and surjective; or iv') a regular embedding onto a closed subvariety of $Y$. We recall that the reason that in the affine setting iii') and iv') are actually stronger than iii) and iv) respectively is because the image of a closed subvariety of $\mathbb{C}^n$ under a polynomial map is not always a closed subvariety but only constructible (Chevalley's theorem). In the above formulations, we may as well ask for the case where $F$ is a polynomial, in which case the proof will be the same.     

We will show that all of the above questions are computable. As usual, the degree of an affine variety is the degree of its closure in the projective space. With this convenience, the proofs of the previous results apply straightly forwardly to questions i) and ii), and show that they are computable. For question iii), the application is almost straightly forward, after we projectivise the varieties and the maps. Then an application of Zariski's main theorem (which says that the preimage of a smooth point in a variety $V$ by a birational morphism is connected, and hence is one point if the fibre has an isolated point) will again give that the fact that relative tangent spaces are $0$ at every points on the graph gives that the projection $\pi$ onto the first factor is a regular embedding of the graph $\Gamma _f$ into $X$ and vice versa. To check that the graph $\Gamma _f$ is the graph of a regular morphism, we need to check that the projection $\pi :\Gamma _f\rightarrow X$ is surjective. This then can be checked as in the end of the proof of iv') below. Since constructible sets form a Boolean algebra (close under finite union and complementation), it follows that we can parametrise question iii) also by algebraic varieties. For question iv), we use iii), and then similarly check that the relative tangent spaces of the projection $\Gamma _f\rightarrow Y$ are all $0$.  

Now we briefly show how to solve questions iii') and iv'). First we consider question iii'). By question iii), we can check whether $f$ is regular and whose image is contained in $Y$. To check that the image is actually $Y$ and hence solve the stronger question iii'), it suffices to show that for $\pi :\mathbb{C}^n\times \mathbb{C}^n\rightarrow \mathbb{C}^n$ the projection to the second factor, there is no point $y\in Y$ for which the intersection $\pi ^{-1}(y)\cap \Gamma$, here $\Gamma$ is the one constructed in the proof of Theorem \ref{Theorem2}, is empty. Note that the condition that $\pi ^{-1}(y)\cap \Gamma =\emptyset$ can be described in terms of polynomial equations by effective Hilbert Nullstellensatz. Hence, the maps from question iii) (call their parameter space, in terms of the coefficients of $F$ only, $W_1$) whose image are not the whole of $Y$ can be parametrised (in terms of the coefficients of $F$ only) by a constructible subset $W_2$ of $W_1$. By \cite{harris-michalek--sertoz}, $W_1$ and $W_2$ would be explicitly constructed from $X,Y$ and $d$. Then the parameter space for iii') is the constructible set $W=W_1\backslash W_2$. In particular, it is an algebraic variety.

We end this subsection by giving a proof for the fact that question iv') is also computable. By question iv), we can check whether $f$ is a regular embedding into $Y$. Denote by $Z=f(X)\subset Y$, which we do not know in advance. We need to have that $Z$ is a closed subvariety of $Y$. Let $\overline{Z}$ be the closure of $Z$ in $Y$, then $\overline{Z}$ is a closed subvariety of $Y$ of the same dimension as that of $X$. As in the proof of Theorem \ref{Theorem1}, the degree of $\overline{Z}$ is explicitly bounded. Since in question iv') we want to show that $f(X)$ is a closed subvariety of $Y$, it follows that $f(X)=\overline{Z}$, and hence $\overline{Z}$ must also be smooth. Then by Lemma \ref{Lemma2}, the ideal $I(Z)$ is generated by an explicitly bounded number of polynomials of degrees $\leq \deg(\overline{Z})$. (Again, these polynomials are undetermined, but we can parametrise them by their coefficients whose number is explicitly bounded.) We add to these polynomials also a finite set of generators for the variety $Y$ to make sure that $\overline{Z}$ is contained in $Y$. We also add in the equations for the condition that $\overline{Z}$ is smooth (which should be, if it is to be the image of $X$ under an isomorphism $f$). Denote by $\pi :\mathbb{C}^n\times \mathbb{C}^n\rightarrow \mathbb{C}^n$ the projection to the second factor.  Then as in the previous paragraph, the fact that $f(X)=\overline{Z}$ is the same as that there is no $z\in \overline{Z}$ for which $\pi ^{-1}(z)\cap \Gamma =\emptyset$. Hence, as before, among the regular embeddings $f$ of $X$ into $Y$, those for which $f(X)$ is not a closed subvariety is parametrised by a constructible subset of the variety parametrising the answer to question iv). Hence, the parameter space for iv'), which is the difference between these two varieties, is also a constructible set and hence an algebraic variety. 

\subsection{Maps on arbitrary algebraic varieties} Since any algebraic variety has a finite Zariski open cover by affine varieties, the results in the previous subsection imply that all questions i), ii), iii), iii'), iv) and iv') in the previous subsection are also computable when $X$ and $Y$ are arbitrarily irreducible algebraic varieties. To this end, it is enough to make precise for a rational map $f:X\dashrightarrow Y$, what it means to have that $f$ has bounded degree. To this end, we can proceed as follows.  Let $X_1,\ldots ,X_p$ be a Zariski open covering by affine varieties for $X$, and $Y_1,\ldots ,Y_q$ be a Zariski open covering by affine varieties for $Y$. Assume that $X_i$ and $Y_j$ belong to $\mathbb{C}^n$ for all $i,j$. Then we say that the degree of $f$ is bounded by a positive integer $d$ if for every pair $i,j$ for which $f$ maps $X_i$ into $Y_j$, there is a rational map $F_{i,j}:\mathbb{C}^n\dashrightarrow \mathbb{C}^n$ of degree $\leq d$ and so that $F_{i,j}|_{X_i}=f|_{X_i}$.

\section{Concluding remarks}

In this paper we showed that birational maps of bounded degrees between algebraic varieties can be parametrised by some algebraic varieties. We provide an explicit algorithm for implementing on computers. We also proved that similar results hold for other interesting classes of maps, such as biregular isomorphisms. These results are valid for both varieties over $\mathbb{C}$ and over fields of positive characteristic. They provide countable invariants for characterising these maps. Based on these results, together with Iitaka's fibrations, we proposed a rough approach towards solving the birationality problem, computationally and effectively, in general. 

An application of the above results is the following. Let $X$ be a smooth algebraic affine curve. Then whether or not $X$ is algebraically embedded in the affine plane $\mathbb{A}^2$ is characterised by a countable set of varieties explicitly constructed from $X$. In fact, $X$ is algebraically embedded into $\mathbb{A}^3$, and hence we can apply the results in Subsection \ref{SectionAffine}. The question of whether there is a finite set of invariants characterising that a smooth affine algebraic curve is algebraically embeddable into $\mathbb{C}^2$ is a long standing, classical open question.  

In the remarks after the proof of Theorem \ref{TheoremMain}, we proposed an approach toward the birationality problem. If it is true that $X$ and $Y$ are {\bf not} birationally equivalent, and we want to confirm this, then we can try the following alternative approach. That $X$ and $Y$ are not birationally equivalent is the same as that all the systems $W(X,Y,d)$ ($d=1,2,3,\ldots $) have no solutions, which by effective Hilbert's Nullstellensatz is the same as having an identity $1\equiv\sum \tau _ih_i$, where $h_i$ are a generator for the ideal of $W(X,Y,d)$, and all $\tau _i$ and $h_i$ have degrees bounded in terms of $d$ and $X,Y$. The systems $W(X,Y,d+1)$ contains as a special case the system $W(X,Y,d)$, for all $d$. Hence if we are able to verify the existence of such an identity for small values of $d$, we may be able to use induction to deduce the identity for general $d$. 

It is interesting to know if in Theorems \ref{Theorem2} and \ref{Theorem3} we can get rid of the smoothness assumptions on the variety $X$.

\end{document}